\newtheorem{dfn}{Definition}[section]
\newtheorem{thm}[dfn]{Theorem}
\newtheorem{thmo}{Theorem}[section]
\newtheorem{clm}[dfn]{Claim}
\newtheorem{lem}[dfn]{Lemma}
\newtheorem{prop}[thmo]{Proposition}
\newtheorem{conjecture}[dfn]{Conjecture}
\def\QED{\hfill \rule{7pt}{7pt}}
\newcommand{\dH}{{\mathcal{H}}}
\newcommand{\dE}{{\mathcal{E}}}
\newcommand{\dC}{{\mathcal{C}}}
\newcommand{\dF}{{\mathcal{F}}}
\newcommand{\dN}{{\mathcal{N}}}
\newcommand{\dS}{{\mathcal{S}}}
\newcommand{\tcoveropt}[1]{\mathcal{C}\mathcal{C}_t(#1)}
\newcommand{\turangraph}[2]{T_{#1,#2}}
\newcommand{\coveropt}[2]{{\mathcal{C}\mathcal{C}_{#1}(#2)}}
\newcommand{\Rmnum}[1]{\expandafter\@slowromancap\romannumeral #1@}
\begin{document}
\title{On the $4$-clique cover number of graphs}

\author{Yihan Chen\footnotemark[1]
\and Jialin He\footnotemark[2]
\and
	Tianying Xie\footnotemark[3]}
\date{}

\maketitle

\footnotetext[1]{School of Mathematical Sciences, University of Science and Technology of China, Hefei, Anhui, 230026, China. Email: cyh2020@mail.ustc.edu.cn}

\footnotetext[2]{Department of Mathematics, Hong Kong University of Science and Technology, Clear Water Bay, Kowloon 999077, Hong Kong. Email: majlhe@ust.hk}

\footnotetext[3]{School of Mathematical Sciences, University of Science and Technology of China, Hefei, Anhui, 230026, China. Email: xiety@ustc.edu.cn}

\begin{abstract}
    In 1966, Erd\H{o}s, Goodman, and P{\'o}sa proved that $\lfloor n^2/4 \rfloor$ cliques are sufficient to cover all edges in any $n$-vertex graph, with tightness achieved by the balanced complete bipartite graph. 
    This result was generalized by Dau, Milenkovic, and Puleo, who showed that at most 
    $\lfloor \frac n 3 \rfloor \lfloor \frac {n+1} 3 \rfloor \lfloor \frac {n+2} 3 \rfloor$ 
    cliques are needed to cover all triangles in any $n$-vertex graph $G$, and the bound is best possible as witnessed by the balanced complete tripartite graph. 
    They further conjectured that for $t \geq 4$, the $t$-clique cover number is maximized by the Tur\'an graph $\turangraph{n}{t}$. 
    We confirm their conjecture for $t=4$ using novel techniques, including inductive frameworks, greedy partition method, local adjustments, and clique-counting lemmas by Erd\H{o}s and by Moon and Moser.\\

\textbf{Mathematics Subject Classification (2020):} 
05B40 05C70 
\end{abstract}

\section{Introduction}
For $n \ge t$, the \emph{Tur\'an graph} $\turangraph{n}{t}$ is the $n$-vertex complete $t$-partite graph such that the part sizes differ by at most one.
A graph is \textit{$K_t$-free} if it contains no copy of the $t$-clique $K_t$ as a subgraph.
Tur\'an's celebrated theorem~\cite{Turan} states that $\turangraph{n}{t-1}$ is the unique $n$-vertex $K_t$-free graph maximizing the number of edges. 
Generalizations of Tur\'an's theorem in various directions form a central theme in extremal graph theory.
Our starting point is one of the early result in this area due to Erd\H{o}s, Goodman, and P\'osa~\cite{EGP}, which says that the edge set of every $n$-vertex graph $G$ can be covered by at most $\lfloor n^2/4 \rfloor$ cliques, and equality holds if and only if $G$ is $\turangraph{n}{2}.$
Equivalently, the edges of a triangle-free graph with maximized number of edges are the hardest to cover with cliques.
 
Inspired by community detection, Dau, Milenkovic, and Puleo~\cite{DMP} generalized the edge cover problem to the $t$-clique cover problem for $t > 2$.
A \emph{$t$-clique cover} of a graph $G$ is a collection of cliques such that every $t$-clique in $G$ is a subgraph of some clique of the collection. 
The \emph{$t$-clique cover number} of $G$, denoted by $\tcoveropt{G}$, is the minimum number of cliques in a $t$-clique cover of $G$.
Note that $\coveropt{1}{G}$ is the chromatic number of the complement of $G$. 
The graph parameter $\coveropt{2}{G}$ is well-studied and also known as the intersection number of $G$~\cite{EGP,L68,R85,ST99}. 
Dau et al.~\cite{DMP} extended the Erd\H{o}s-Goodman-P\'osa result~\cite{EGP} to $t=3$, 
proving that $\mathcal{C}\mathcal{C}_3(G)\le \mathcal{C}\mathcal{C}_3(\turangraph{n}{3})$ for all $n$-vertex graph $G$ and (when $n \ge 3$) equality holds if and only if $G$ is $\turangraph{n}{3}$. 
They conjectured the following generalization for $t\ge4$.


\begin{conjecture}[Dau, Milenkovic and Puleo~\cite{DMP}, Conjecture 1]\label{conj:clique_cover}
If $n$ and $t$ are positive integers such that $4\le t\le n$, 
then for every $n$-vertex graph $G$, we have
\[ \tcoveropt{G} \le \tcoveropt{\turangraph{n}{t}}, \]
with equality if and only if $G = \turangraph{n}{t}$. 
\end{conjecture}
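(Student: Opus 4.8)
The plan is to prove the full conjecture by induction, carried primarily on $n$ with an inner induction on $t$: the cases $t\le 3$ are the Erd\H{o}s--Goodman--P\'osa and Dau--Milenkovic--Puleo theorems, and the small-$n$ cases (say $n$ bounded in terms of $t$) are checked directly, so we may assume the statement for all pairs $(t-1,n')$ and for all pairs $(t,n')$ with $n'<n$. Write $k_t(H)$ for the number of $t$-cliques of a graph $H$, and $P(n,t):=k_t(\turangraph{n}{t})=\prod_{i=1}^{t}n_i$, the product of the part sizes $n_1\ge\cdots\ge n_t$ of $\turangraph{n}{t}$, so $n_1=\lceil n/t\rceil$. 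Since $\turangraph{n}{t}$ is $K_{t+1}$-free, all of its cliques have at most $t$ vertices, hence the only clique that covers a given $t$-clique is that $t$-clique itself; therefore $\tcoveropt{\turangraph{n}{t}}=P(n,t)$, and the conjecture becomes the assertion that $\tcoveropt{G}\le P(n,t)$ for every $n$-vertex $G$, with equality only for $G=\turangraph{n}{t}$.

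First I would dispose of the $K_{t+1}$-free case. If $G$ contains no $K_{t+1}$, then exactly as above each $t$-clique of $G$ can only be covered by itself, so $\tcoveropt{G}=k_t(G)$; the theorem of Erd\H{o}s on the maximum number of copies of $K_t$ in a $K_{t+1}$-free $n$-vertex graph gives $k_t(G)\le P(n,t)$ with equality if and only if $G=\turangraph{n}{t}$, which settles this case. Hence from now on $G$ contains a $(t+1)$-clique, and the target becomes the \emph{strict} inequality $\tcoveropt{G}<P(n,t)$.

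The engine of the induction is the deletion bound $\tcoveropt{G}\le \coveropt{t}{G-v}+\coveropt{t-1}{G[N(v)]}$, valid for every vertex $v$: take an optimal $t$-clique cover of $G-v$ (which handles all $t$-cliques avoiding $v$) and adjoin the cliques $\{v\}\cup D$ as $D$ ranges over an optimal $(t-1)$-clique cover of $G[N(v)]$ (which handles all $t$-cliques through $v$). Choosing $v$ with $\deg(v)=\delta:=\delta(G)$ and applying the two inductive hypotheses gives $\tcoveropt{G}\le P(n-1,t)+P(\delta,t-1)$. A direct computation shows $P(n-1,t)=(n_1-1)\prod_{i\ge 2}n_i$ and $P(n-n_1,t-1)=\prod_{i\ge 2}n_i$, so $P(n,t)=n_1\prod_{i\ge 2}n_i=P(n-1,t)+P(n-n_1,t-1)$; since $P(\cdot,t-1)$ is strictly increasing, $\delta\le n-n_1$ yields $\tcoveropt{G}\le P(n,t)$, strictly unless $\delta=n-n_1$. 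The borderline $\delta=n-n_1$ together with $G\ne\turangraph{n}{t}$ I would rule out by a short local argument that feeds in the equality characterizations of the two inductive hypotheses (adjusting the neighbourhood of $v$ and the structure of $G-v$ — a local-adjustment step). It therefore remains to handle the dense regime $\delta(G)\ge n-\lceil n/t\rceil+1$.

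This dense case is where I expect the main obstacle to lie — and it is precisely the regime where the techniques that settle $t=4$ do not obviously extend. When $\delta(G)\ge n-\lceil n/t\rceil+1$, a degree count combined with Tur\'an's theorem gives $e(G)>\ex(n,K_{t+1})$ by a definite margin, so supersaturation produces $\Omega(n^{t+1})$ copies of $K_{t+1}$ and, via the Moon--Moser inequalities, an entire hierarchy of larger cliques. Here the trivial bound $\tcoveropt{G}\le k_t(G)$ is far above $P(n,t)$, so the proof must extract structure rather than merely count: the plan is to take an optimal partition of $V(G)$ into cliques (equivalently an optimal colouring of the complement, which has maximum degree below $\lceil n/t\rceil$ and so is well controlled), put those cliques into the cover, and then account for the remaining cross-partition $t$-cliques recursively, exploiting that each large clique $Q$ inserted into the cover discharges $\binom{|Q|}{t}$ many $t$-cliques at once. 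The heart of the matter is a counting estimate — powered by the clique-counting lemmas of Erd\H{o}s and of Moon and Moser, together with local adjustments near the defects of the partition — showing that the savings from the large cliques strictly outweigh $k_t(G)$, so that the total cover size drops below $P(n,t)$; carrying these inequalities through is routine in spirit but becomes increasingly intricate as $t$ grows, which is exactly why the case $t=4$ is tractable while the general conjecture is the real challenge. Finally, tracking equality through all three steps shows that $\tcoveropt{G}=P(n,t)$ forces $G=\turangraph{n}{t}$.
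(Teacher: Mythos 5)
The statement you are proving is the full Dau--Milenkovic--Puleo conjecture, which the paper itself does not prove: it is stated as a conjecture and only the case $t=4$ is established (Theorem~\ref{Thm:Main}). Your outer skeleton is sound and matches the paper's $t=4$ framework: the $K_{t+1}$-free case via Erd\H{o}s' theorem, the deletion bound $\tcoveropt{G}\le \coveropt{t}{G\setminus\{v\}}+\coveropt{t-1}{G[N(v)]}$ at a minimum-degree vertex, the identity $k_t(\turangraph{n}{t})=k_t(\turangraph{n-1}{t})+k_{t-1}(\turangraph{n-\lceil n/t\rceil}{t-1})$, and the equality analysis at $\delta(G)=n-\lceil n/t\rceil$ are all correct and are exactly the easy part (this is also how the paper handles $d\le\lfloor 3n/4\rfloor$).

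The genuine gap is the dense regime $\delta(G)\ge n-\lceil n/t\rceil+1$, which you describe only as a plan: partition $V(G)$ into few cliques, cover the cross-partition $t$-cliques ``recursively,'' and hope a counting estimate via Erd\H{o}s and Moon--Moser shows the savings beat $k_t(G)$. No covering construction is specified for the cross-partition $t$-cliques, no analogue of the paper's key Lemma~\ref{Lem:key lemma} (a bound on $\coveropt{t-1}{G[N(v)]}$ that improves on the Tur\'an-graph bound when $|N(v)|$ is large and $\delta(G[N(v)])\ge 2\delta(G)-n$ is large) is formulated, and no inequality is actually verified; you even concede that carrying it out ``is exactly why the case $t=4$ is tractable while the general conjecture is the real challenge.'' For $t=4$ the paper needs the reduction to $\omega(G)=5$, an explicit three-layer cover from a greedy partition, the greedy-sequence adjustment operations, and a further refinement counting $K_4^{(3)}$'s in an auxiliary $3$-uniform hypergraph via Moon--Moser, all of which are sharply $t$-specific; nothing in your sketch replaces this for general $t$. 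A smaller but real error in the same paragraph: the degree hypothesis only pushes $e(G)$ above $\ex(n,K_{t+1})$ by $O(n)$, so supersaturation yields on the order of $n^{t-1}$ copies of $K_{t+1}$, not the claimed $\Omega(n^{t+1})$, so even the quantitative starting point of your dense-case plan is overstated. As written, the proposal proves (modulo routine details) only what was already known plus the easy half of the induction, and leaves the conjecture's actual content open.
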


The value $\tcoveropt{\turangraph{n}{t}}$ is easily calculated which is equal to the number of $K_t$'s in $\turangraph{n}{t}$, since the largest cliques in $\turangraph{n}{t}$ are $K_t$'s, each must be taken in any $t$-clique cover of $\turangraph{n}{t}$. 
For $t=3$, Dau, Milenkovic, and Puleo~\cite{DMP} confirmed Conjecture~\ref{conj:clique_cover} by induction on $n$.
Recently, the second author, together with Balogh, Krueger, Nguyen and Wigal~\cite{BHKNW}, proved an asymptotic version of Conjecture~\ref{conj:clique_cover} with a different approach, by showing that for every fixed $t \geq 2$, 
$\tcoveropt{G} \leq (1+o(1))\tcoveropt{\turangraph{n}{t}}$ 
for every $n$-vertex graph $G$, where the $o(1)$ goes to $0$ as $n$ goes to infinity.
For larger $t$, the case analysis in Dau et al.'s proof~\cite{DMP} becomes intractable. 

Our main contribution is the confirmation of Conjecture~\ref{conj:clique_cover} for $t=4$ using novel methods.

\begin{thm}	\label{Thm:Main}
    For every $n$-vertex graph $G$ with $n\ge 4,$ we have
	$\coveropt{4}{G} \leq \coveropt{4}{\turangraph{n}{4}},$
	with equality if and only if $G =\turangraph{n}{4}.$
\end{thm}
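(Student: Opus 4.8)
The plan is to induct on $n$, treating the base cases $n\le n_{0}$ (for a suitable small constant $n_{0}$) directly. Write $f(n):=\coveropt{4}{\turangraph{n}{4}}$, which by the observation after Conjecture~\ref{conj:clique_cover} equals the number of copies of $K_4$ in $\turangraph{n}{4}$, and put $t_{3}(d):=\coveropt{3}{\turangraph{d}{3}}$, the number of triangles in $\turangraph{d}{3}$; note that $t_{3}$ vanishes for $d<3$ and is strictly increasing for $d\ge 3$. The engine of the induction is the elementary inequality
\[
  \coveropt{4}{G}\ \le\ \coveropt{4}{G-v}\ +\ \coveropt{3}{G[N_{G}(v)]}\qquad\text{for every }v\in V(G),
\]
which holds because a copy of $K_4$ in $G$ either avoids $v$, and is covered by a $4$-clique cover of $G-v$, or consists of $v$ together with a triangle of $G[N_{G}(v)]$, and is covered by appending $v$ to each clique of a $3$-clique cover of $G[N_{G}(v)]$. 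Applying the $t=3$ case of Conjecture~\ref{conj:clique_cover} (Dau--Milenkovic--Puleo) to $G[N_{G}(v)]$, a graph on $d:=\deg_{G}(v)$ vertices, gives $\coveropt{3}{G[N_{G}(v)]}\le t_{3}(d)$, with equality only if $G[N_{G}(v)]=\turangraph{d}{3}$. A short computation then yields the discrete derivative $f(n)-f(n-1)=t_{3}(d^{\ast})$, where $d^{\ast}:=n-\lceil n/4\rceil=\delta(\turangraph{n}{4})$.

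\smallskip
\noindent\emph{Low minimum degree.} Suppose $\delta(G)\le d^{\ast}$ and let $v$ have degree $\delta(G)$. Then
\[
  \coveropt{4}{G}\ \le\ f(n-1)+t_{3}(\delta(G))\ \le\ f(n-1)+t_{3}(d^{\ast})\ =\ f(n).
\]
If $\delta(G)<d^{\ast}$ the middle inequality is strict. If $\delta(G)=d^{\ast}$, then $\coveropt{4}{G}=f(n)$ forces $\coveropt{4}{G-v}=f(n-1)$ and $\coveropt{3}{G[N_{G}(v)]}=t_{3}(d^{\ast})$, so by the inductive hypothesis $G-v=\turangraph{n-1}{4}$ and by the uniqueness in the $t=3$ theorem $G[N_{G}(v)]=\turangraph{d^{\ast}}{3}$; comparing part sizes shows that an induced $\turangraph{d^{\ast}}{3}$ in $\turangraph{n-1}{4}$ must be the union of three of its parts, so $N_{G}(v)$ is such a union and $G=\turangraph{n}{4}$. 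This settles the low-degree regime, equality case included. (Note, too, that every $K_{5}$-free graph falls here: by Tur\'an's theorem $e(G)\le\ex(n,K_5)=e(\turangraph{n}{4})$, whereas $\delta(G)>d^{\ast}$ would force $e(G)\ge n\delta(G)/2>e(\turangraph{n}{4})$.)

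\smallskip
\noindent\emph{High minimum degree.} It remains to treat $\delta(G)>d^{\ast}=\delta(\turangraph{n}{4})$; since $\delta(G)\ge d^{\ast}+1$ we get $e(G)\ge n\delta(G)/2>e(\turangraph{n}{4})=\ex(n,K_5)$, so $G$ contains a copy of $K_5$ and in particular $G\ne\turangraph{n}{4}$, and the goal is the strict inequality $\coveropt{4}{G}<f(n)$. This is the heart of the argument and the step I expect to be the main obstacle: the one-vertex recursion now loses too much, because a minimum-degree vertex has a large, possibly very dense neighbourhood whose triangles the bare $t=3$ bound cannot cover cheaply enough. The asset to exploit is that a clique of size $\omega$ covers $\binom{\omega}{4}$ copies of $K_4$ at unit cost, so that dense graphs admit unexpectedly economical covers --- witness $\coveropt{4}{K_n}=1$, or $\turangraph{n}{5}$, whose copies of $K_4$ are covered by a ``Latin-hypercube'' family of $(n/5)^{4}<f(n)$ copies of $K_5$. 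Accordingly I would: (i) run a greedy partition of $V(G)$ into four classes minimising the number of non-edges across classes, and split according to how close $G$ is to $\turangraph{n}{4}$; (ii) in the stable regime --- $G$ equal to $\turangraph{n}{4}$ up to a bounded edit --- track the copies of $K_5$ and larger cliques forced by each added edge and show that, used in the cover in place of individual copies of $K_4$, they bring the count below $f(n)$; (iii) in the unstable regime --- $G$ far from $\turangraph{n}{4}$ --- invoke the clique-counting lemmas of Erd\H{o}s and of Moon and Moser to guarantee an abundance of large cliques (copies of $K_5$, and more generally of $K_{t+1}$'s at each level of the recursion), extract from these a large family of pairwise $K_4$-disjoint cliques, and cover with that family together with the copies of $K_4$ it misses, taken singly; and (iv) dispose of the residual borderline configurations, and verify that equality cannot occur in this regime, by local adjustments --- shifting vertices between the four classes and re-optimising the induced cover. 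Throughout, the delicate point is quantitative bookkeeping: at each step one must certify that the saving harvested from large cliques strictly outweighs the surplus introduced elsewhere, and tightly enough that the uniqueness of $\turangraph{n}{4}$ also falls out.
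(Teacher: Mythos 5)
Your inductive framework is exactly the paper's: delete a minimum-degree vertex $v$, use $\coveropt{4}{G}\le\coveropt{4}{G-v}+\coveropt{3}{G[N(v)]}$, and invoke the Dau--Milenkovic--Puleo $t=3$ theorem when $\delta(G)\le\lfloor 3n/4\rfloor=\delta(\turangraph{n}{4})$. Your treatment of that low-degree regime, including the equality analysis and the nice observation that every $K_5$-free graph lands there (so Tur\'an/Erd\H{o}s-type uniqueness is absorbed into this case), is sound modulo routine verification of the part-size bookkeeping in the four residue classes of $n$ mod $4$.

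The problem is the regime $\delta(G)>\lfloor 3n/4\rfloor$. You correctly identify it as ``the heart of the argument and the main obstacle,'' but what you offer there is a research plan, not a proof: steps (i)--(iv) name strategies (a four-class partition of $V(G)$, a stability dichotomy, extraction of pairwise $K_4$-disjoint large cliques, local adjustments) without stating a single quantitative lemma or deriving any bound, and you concede that ``the delicate point is quantitative bookkeeping'' without doing any of it. This case is essentially the entire content of the paper. What the paper actually does is first bound the clique number (a maximum clique of size $c\ge 6$ can be peeled off cheaply enough by a counting argument using Erd\H{o}s's lemma, and $\omega(G)=4$ is immediate), reducing to $\omega(G)=5$; then, for the $K_5$-free neighbourhood $H=G[N(v)]$, it builds an \emph{explicit} $3$-clique cover from a greedy partition of $H$ into cliques of size at most $4$ (not a partition of $V(G)$ into four independent-like classes), encodes the cover size by a ``greedy sequence,'' optimizes over such sequences by local exchange operations, and finally refines the count via an auxiliary $3$-uniform hypergraph on the $4$-cliques of the partition, using Moon--Moser to find many $K_4^{(3)}$'s whose associated $\turangraph{16}{4}$'s admit a $24$-clique rather than $48$-clique cover. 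None of this is present or replaced by an equivalent argument in your proposal, so the theorem is not proved in the case that matters. In particular, your step (iii) asserts without justification that the clique-counting lemmas yield a large family of pairwise $K_4$-disjoint cliques covering enough $K_4$'s; extracting disjointness and certifying that the resulting saving beats the surplus is precisely where the paper has to work hardest, and it is not automatic.
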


The proof of Theorem~\ref{Thm:Main} is based on an inductive framework on the number of vertices, 
using a generalization of Lov\'{a}sz's greedy partition approach~\cite{L68},
combining local adjustment operations and several clique-counting lemmas.
For a brief overview of the proof, we direct readers to the beginning of Section~3 and Section~4.

The paper is organized as follows.
Section~2 presents preliminaries, establishes clique number bounds via induction. 
In Section~3, we prove Theorem~\ref{Thm:Main} by reducing it to the key Lemma~\ref{Lem:key lemma}.
The complete proof of Lemma~\ref{Lem:key lemma} will be presented in Section~4.

\section{Preliminaries}\label{Sec:Pre}
This section is divided into two parts. 
First, we give notation and collect essential lemmas. 
Second, we use induction to bound the clique number.

\subsection{Notations and lemmas}
Let $G$ be a graph with vertex set $V(G)$, edge set $E(G)$, and $e(G) = |E(G)|$. 
For $v \in V(G)$, let $N_G(v)$ denote its neighbor and $d_G(v) = |N_G(v)|$ denote its degree. 
The minimum degree of $G$ is $\delta(G)$.
For $U\subset V(G),$ 
$G[U]$ is the subgraph of $G$ induced by $U$,
$G\backslash U$ is the subgraph obtained from $G$ by deleting vertices in $U$, and
$N_{G}(U):=\bigcap_{v\in U} N_{G}(v)$ is the set of common neighbors of vertices in $U$ in $G$.
For disjoint $U,W\subset V(G)$,
$E_{G}(U,W)$ is the set of edges in $E(G)$ between $U$ and $W$, and $e_{G}(U,W):=|E_{G}(U,W)|$.
Similarly, we let
$N_U(W):=\{u\in U: uw\in E(G)\ \text{for any}\ w\in W\}$
be the set of common neighbors of vertices in $W$ within $U$.
Let $d_{U}(W):=|N_{U}(W)|,$
$N_U[W]:=N_U(W)\cup W$,
and $T_U[W]:=G[N_U[W]].$
The clique number $\omega(G)$ is the size of a maximum clique of $G$. 
We use $k_{t}(G)$ to denote the number of $t$-clique in $G.$
For all above notations, we often drop the subscripts when they are clear from context.
Throughout this paper, the notation $\binom{x}{2}$ means the function $x(x-1)/2$ for all reals $x$.
For any positive integer $k$, we write $[k]$ as the set $\{1,2,...,k\}$.

Note that $\mathcal{C}\mathcal{C}_4(\turangraph{n}{4})=k_4(\turangraph{n}{4})$ for all positive integers $n$. 
It is easy to check that, for $n\ge 6,$ 
\begin{align}\label{Equ: bound of k4(T(n,4))}
    \frac {(n-2)^2(n+2)^2}{256}\le k_4(\turangraph{n}{4})\le \frac{n^4}{256}\ \  \text{and}\ \ \frac{(n-1)^3}{64}\le k_4(\turangraph{n}{4})-k_4(\turangraph{n-1}{4})\le \frac{n^3}{64}.
\end{align}

Our proofs use the following two fundamental counting lemmas considering the number of cliques proved by Erd\H{o}s~\cite{E62}, and Moon and Moser~\cite{MM}, respectively. 

\begin{lem}[Erd\H{o}s~\cite{E62}]\label{Lem:Erdos}
	Let $k, t$ be two positive integers with $k>t.$
	We define $h(n,k,t)$ to be the maximum number of $t$-clique among all $n$-vertex $K_{k}$-free graphs.
	Then,
	$$
	h(n,k,t)
	= k_t(\turangraph{n}{k-1}) 
	=\sum_{0\leq i_1 < \cdots <i_t \leq k-2}
	\prod_{r=1}^{t} \left\lfloor \frac{n+i_r}{k-1} \right\rfloor
	\leq \binom{k-1}{t} \left(\frac{n}{k-1}\right)^t.
	$$
\end{lem}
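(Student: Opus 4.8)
The plan is to reduce, via a Zykov-type symmetrization, to the case of complete multipartite graphs, then maximize the resulting symmetric function of the part sizes by a one-step smoothing argument; the explicit formula and the crude upper bound then fall out.

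\emph{Step 1 (reduction to complete multipartite graphs).} Fix $n$ and pick an $n$-vertex $K_k$-free graph $G$ attaining the maximum number of copies of $K_t$, so $k_t(G)=h(n,k,t)$. For $v\in V(G)$ write $c(v)$ for the number of $K_t$'s of $G$ through $v$, so $c(v)=k_{t-1}\big(G[N_G(v)]\big)$. For non-adjacent $u,w$ let $G_{u\to w}$ be obtained from $G$ by deleting all edges at $u$ and then joining $u$ to every vertex of $N_G(w)$. Since $u\notin N_G(w)$, the induced graph on $N_G(w)$ is unchanged, so the $K_t$'s of $G_{u\to w}$ through $u$ number $k_{t-1}(G[N_G(w)])=c(w)$ while all other $K_t$'s are untouched; hence $k_t(G_{u\to w})=k_t(G)-c(u)+c(w)$. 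Also $G_{u\to w}$ is still $K_k$-free, because a $K_k$ through $u$ would restrict to a $K_{k-1}$ in $G[N_G(w)]$ and then, with $w$, yield a $K_k$ in $G$. Now suppose $G$ is not complete multipartite; then it contains a ``bad triple'' $x,y,z$ with $xy,yz\notin E(G)$ and $xz\in E(G)$, and we may assume $c(x)\ge c(z)$. Performing $x\to y$ and $z\to y$ simultaneously produces a $K_k$-free graph with exactly $k_t(G)+2c(y)-c(x)-c(z)+p$ copies of $K_t$, where $p\ge 0$ counts the $K_t$'s of $G$ through both $x$ and $z$ (the same inclusion--exclusion as above, using that $\{x,y,z\}$ becomes independent). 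If $c(y)>c(x)$ this strictly exceeds $k_t(G)$, a contradiction; so $c(y)\le c(x)$, and then $x\to y$ turns $x$ and $y$ into twins (non-adjacent with equal neighborhoods) while keeping $k_t$ maximal, and iterating such twin-creating moves reaches a complete multipartite graph (the termination is routine bookkeeping). Being $K_k$-free, this graph has at most $k-1$ parts. Since a complete multipartite graph with part sizes $n_1,\dots,n_{k-1}$ (zeros allowed) has exactly $e_t(n_1,\dots,n_{k-1})$ copies of $K_t$, where $e_t$ is the degree-$t$ elementary symmetric polynomial, we get
\[
h(n,k,t)=\max\Big\{\,e_t(n_1,\dots,n_{k-1}) : n_i\in\mathbb Z_{\ge 0},\ \textstyle\sum_i n_i=n\,\Big\}.
\]

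\emph{Step 2 (smoothing the part sizes).} Suppose a maximizer has $n_i\ge n_j+2$ for some $i\ne j$. Freezing the other $k-3$ coordinates and writing $e_t=\alpha+\beta(n_i+n_j)+\gamma\,n_i n_j$, where $\alpha,\beta,\gamma\ge 0$ are the elementary symmetric polynomials $e_t,e_{t-1},e_{t-2}$ of those frozen coordinates, the substitution $(n_i,n_j)\mapsto(n_i-1,n_j+1)$ fixes $n_i+n_j$ and strictly increases $n_i n_j$, hence strictly increases $e_t$ (for $t\ge 2$; the case $t=1$ is trivial). Therefore every maximizer has all parts within one of each other, i.e.\ it is the part-size sequence of $\turangraph{n}{k-1}$, giving $h(n,k,t)=k_t(\turangraph{n}{k-1})$.

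\emph{Step 3 (formula and bound).} Write $n=q(k-1)+s$ with $0\le s\le k-2$. Then $\lfloor (n+i)/(k-1)\rfloor=q+\lfloor (s+i)/(k-1)\rfloor$ equals $q+1$ for exactly the $s$ indices $i\in\{k-1-s,\dots,k-2\}$ and equals $q$ for the other $k-1-s$ indices in $\{0,\dots,k-2\}$; this matches the part sizes of $\turangraph{n}{k-1}$ (namely $s$ parts of size $q+1$ and $k-1-s$ of size $q$), so substituting into $e_t$ yields
\[
k_t(\turangraph{n}{k-1})=\sum_{0\le i_1<\cdots<i_t\le k-2}\ \prod_{r=1}^{t}\left\lfloor\frac{n+i_r}{k-1}\right\rfloor .
\]
Finally, running the same one-step smoothing over the nonnegative reals shows that $e_t(n_1,\dots,n_{k-1})$ subject to $\sum_i n_i=n$ is maximized when every $n_i=n/(k-1)$; applying this to the part sizes of $\turangraph{n}{k-1}$ gives $k_t(\turangraph{n}{k-1})\le\binom{k-1}{t}\big(n/(k-1)\big)^t$ (equivalently, this is Maclaurin's inequality).

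\emph{Main obstacle.} The crux is Step 1: organizing the symmetrization so that it provably terminates at a complete multipartite graph, together with the (routine but somewhat delicate) verification that the simultaneous move $x\to y$, $z\to y$ preserves $K_k$-freeness and changes $k_t$ by exactly $2c(y)-c(x)-c(z)+p$. Steps 2 and 3 are elementary convexity and a floor-function computation.
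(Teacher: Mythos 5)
The paper does not prove this lemma at all; it is quoted as a known theorem of Erd\H{o}s \cite{E62} (the counting generalization of Tur\'an's theorem), so there is no internal proof to compare against. Your Zykov-symmetrization argument is the standard route to this result, and Steps 2 and 3 (the smoothing of part sizes, the floor-function identification of the Tur\'an part sizes with $\lfloor (n+i)/(k-1)\rfloor$, and the Maclaurin/AM--GM bound) are correct; the computation $k_t(G_{u\to w})=k_t(G)-c(u)+c(w)$, the preservation of $K_k$-freeness, and the inclusion--exclusion count $k_t(G)+2c(y)-c(x)-c(z)+p$ for the simultaneous move all check out.

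Two points in Step 1 need repair. First, the twin-creating move is written in the wrong direction: with your convention that $u\to w$ replaces $N(u)$ by $N(w)$, the move $x\to y$ changes $k_t$ by $c(y)-c(x)\le 0$, which can be strictly negative and so does \emph{not} ``keep $k_t$ maximal''; you need $y\to x$ (copy the high-count vertex over the low-count one), which changes $k_t$ by $c(x)-c(y)\ge 0$ and hence, by maximality, by exactly $0$, turning $y$ into a twin of $x$ while remaining extremal. Second, the termination of the iteration is asserted as ``routine bookkeeping'' but is the one genuinely delicate point of Zykov symmetrization: the moves do not obviously decrease any stated potential, and naive iteration can cycle. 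A standard fix is to symmetrize one non-adjacency class at a time (after $y\to x$, the set of vertices twinned with $x$ only grows, and one argues that the number of unordered pairs $\{u,v\}$ with $u\not\sim v$ but $N(u)\ne N(v)$ strictly decreases), or to run the argument among extremal graphs maximizing a secondary quantity. A further small gap in Step 2: the smoothing step is only \emph{strictly} increasing when $\gamma=e_{t-2}$ of the frozen coordinates is positive; this holds for any maximizer with $e_t>0$ (which forces at least $t$ positive parts), and when the maximum is $0$ (i.e.\ $n<t$) the identity is trivial, so the conclusion survives, but the degenerate case should be mentioned. With these repairs the proof is complete and is, as far as I can tell, essentially the classical argument.
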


\begin{lem}[Moon and Moser~\cite{MM}]\label{Lem:MM}
	For any integer $t\geq 2$
	and any $n$-vertex graph $G$,
	$$ \frac{k_{t+1}(G)}{k_t(G)} \geq \frac{1}{t^2-1} \left( t^2 \frac{k_t(G)}{k_{t-1}(G)}-n \right).
	$$
\end{lem}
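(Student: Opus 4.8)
The plan is to generalize the classical double-counting proof of the $t=2$ case (the inequality $k_3(G)\ge e(G)\bigl(4e(G)-n^2\bigr)/(3n)$) to higher cliques: I will set up a weighted incidence count linking the $(t-1)$-, $t$-, and $(t+1)$-cliques of $G$, feed in one exact inclusion--exclusion identity about common neighbourhoods, and finish with Cauchy--Schwarz. For a clique $S$ write $d(S):=|N_G(S)|$. Two standard identities will be used repeatedly: $\sum_{S}d(S)=t\,k_t(G)$ and $\sum_{T}d(T)=(t+1)\,k_{t+1}(G)$, where $S$ ranges over all $(t-1)$-cliques of $G$ and $T$ over all $t$-cliques; both hold because a clique on $s-1$ vertices extends to a clique on $s$ vertices in exactly $d(S)$ ways, so each sum counts flag pairs. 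We may assume $k_t(G)>0$ (hence also $k_{t-1}(G)>0$), since otherwise $k_{t+1}(G)=0$ and the asserted inequality is vacuous, its left-hand side being an empty ratio.

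The key local step is an estimate for a single $t$-clique $T$. For $v\in T$ let $S_v:=T\setminus\{v\}$, its $t$ sub-$(t-1)$-cliques. For distinct $v,v'\in T$ the index sets $T\setminus\{v\}$ and $T\setminus\{v'\}$ already cover $T$, so $N_G(S_v)\cap N_G(S_{v'})=N_G(T)$; more generally every intersection of $j\ge 2$ of the sets $\{N_G(S_v)\}_{v\in T}$ collapses to $N_G(T)$. Substituting this into inclusion--exclusion for $\bigl|\bigcup_{v\in T}N_G(S_v)\bigr|$ and using $\sum_{j=2}^{t}(-1)^{j+1}\binom{t}{j}=1-t$ gives the identity
\[
\sum_{v\in T} d(S_v)\;=\;\Bigl|\bigcup_{v\in T}N_G(S_v)\Bigr| + (t-1)\,d(T)\;\le\; n + (t-1)\,d(T),
\]
the inequality being simply $\bigcup_{v\in T}N_G(S_v)\subseteq V(G)$.

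Now sum this over all $t$-cliques $T$ of $G$. On the left, reindex the pairs $(T,v)$ by the $(t-1)$-clique $S=S_v$: a given $(t-1)$-clique $S$ arises from exactly $d(S)$ such pairs, each contributing $d(S_v)=d(S)$, so $\sum_T\sum_{v\in T}d(S_v)=\sum_S d(S)^2$. On the right, $\sum_T\bigl(n+(t-1)d(T)\bigr)=n\,k_t(G)+(t-1)(t+1)\,k_{t+1}(G)$. Combining with Cauchy--Schwarz applied to the $k_{t-1}(G)$ values $d(S)$,
\[
\frac{t^2\,k_t(G)^2}{k_{t-1}(G)}=\frac{\bigl(\sum_S d(S)\bigr)^2}{k_{t-1}(G)}\;\le\;\sum_S d(S)^2\;\le\; n\,k_t(G)+(t^2-1)\,k_{t+1}(G),
\]
and dividing through by $k_t(G)$ and then by $t^2-1$ yields exactly the claimed bound.

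I expect the only delicate point to be the inclusion--exclusion bookkeeping in the local step: one must verify that every multi-fold intersection of the link sets $N_G(S_v)$ genuinely equals $N_G(T)$ (which follows because any two of the deleted singletons are distinct, so the two index sets already cover $T$) and correctly evaluate the alternating binomial sum — this is precisely where the sharp constant $t^2-1$ is produced, so a weaker bound such as a crude Bonferroni inequality would lose a factor and only give $\tfrac{2}{t(t^2-1)}$. Everything else is routine counting, and the degenerate case $k_{t-1}(G)=0$ is absorbed into the reduction above.
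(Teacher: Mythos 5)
Your argument is correct: the local inclusion--exclusion identity $\sum_{v\in T}d(S_v)=\bigl|\bigcup_{v\in T}N_G(S_v)\bigr|+(t-1)d(T)$ is valid (every $j\ge 2$-fold intersection indeed collapses to $N_G(T)$, and the alternating binomial sum is $1-t$), the reindexing gives $\sum_S d(S)^2\le n\,k_t(G)+(t^2-1)k_{t+1}(G)$, and Cauchy--Schwarz with $\sum_S d(S)=t\,k_t(G)$ yields exactly the stated bound after dividing by $k_t(G)>0$. The paper does not prove this lemma but cites Moon and Moser, and your proof is essentially the standard counting-plus-Cauchy--Schwarz argument behind their result, so there is nothing to reconcile with the paper's text.
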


\subsection{Reducing to $\omega(G)=5$}\label{Sec: Reducing to omega(G)=5}
We prove Theorem~\ref{Thm:Main} by induction on $n = |V(G)|$.
In particular, we will show that $\omega(G)\le 5$ under the inductive hypothesis.
The base case $n \in \{4,5\}$ can be easily checked and 
$\coveropt{4}{G} \leq \coveropt{4}{\turangraph{n}{4}}$ with equality if and only if $G = \turangraph{n}{4}$. 
Assume that Theorem~\ref{Thm:Main} holds for all graphs on at most $n-1$ vertices ($n \geq 6$), and let $G$ be an $n$-vertex graph.

Suppose that $\omega(G)=c$ for some integer $1\le c\le n$, 
and let $C$ be a $c$-clique in $G$.
We first claim that $c\ge 4.$
Indeed, if $c\le 3,$ 
then $\coveropt{4}{G}=0<\mathcal{C}\mathcal{C}_4(\turangraph{n}{4})$ for $n\ge 6.$

We give an upper bound of $c$ by constructing a $4$-clique cover of $G$ using $C$.  
Let $G'= G\setminus C,$
and let $\mathcal{C}_1'$ be a minimum $4$-clique cover of $G'.$
By induction hypothesis and inequality~\eqref{Equ: bound of k4(T(n,4))}, 
$$|\mathcal{C}_1'|=\mathcal{C}\mathcal{C}_4(G')\le k_4(\turangraph{n-c}{4})\le ((n-c)/4)^4.$$
We now extend $\mathcal{C}_1'$ to a $4$-clique cover of $G.$
Note that the $4$-cliques of $G$ not yet covered by $\mathcal{C}_1'$ are the $4$-cliques that contain at least one vertex of $C.$
Thus, we define
\begin{equation*}
\mathcal{C}_1:=
\mathcal{C}_1'\cup
\{C\} \cup
\{\bigcup_{v\in G'}T_C[v]\}
\cup \{\bigcup_{uv \in E(G')}T_C[u,v]\}
\cup \{\bigcup_{uvw \  is \ a \  K_3 \ in \  G'}T_C[u,v,w]\}. 
\end{equation*}
It is easy to see that $\dC_1$ is a $4$-clique cover of $G.$ 
Therefore, by using the obvious upper bounds of the number of edges and triangles in $G'$, we have
\begin{equation}\label{eq3.1}
\mathcal{C}\mathcal{C}_4(G)
\leq 
|\dC_1|\leq
\left(\frac{n-c}{4}\right)^4+1+(n-c)+ \binom{n-c}{2} + \binom{n-c}{3}.
\end{equation} 
Now we claim that Theorem~\ref{Thm:Main} holds when $c \ge 11.$
Indeed,  if $n\ge c\ge 11,$ since the right side of~\eqref{eq3.1} is decreasing with respect to $c,$
to complete the proof of Theorem~\ref{Thm:Main}, it suffices to show that
$\left(\frac{n-11}{4}\right)^4+1+(n-11)+ \binom{n-11}{2} + \binom{n-11}{3} < k_4(\turangraph{n}{4}).$
By~\eqref{Equ: bound of k4(T(n,4))}, we have
$$\left(\frac{n-11}{4}\right)^4+1+(n-11)+ \binom{n-11}{2} + \binom{n-11}{3}<
\frac{(n-2)^2(n+2)^2}{256}\le k_4(\turangraph{n}{4}),$$
where the first inequality is equivalent to 
$\frac{1}{3}n^3+\frac{337}{2}n^2-\frac{7783}{3}n+\frac{44255}{4}>0,$
which holds for all $n\ge 4.$
This implies that 
$\mathcal{C}\mathcal{C}_4(G)< \mathcal{C}\mathcal{C}_4(\turangraph{n}{4}),$
and we are done.
Therefore, in the following of the proof, we may assume that $4\le c\le 10.$

For $6\leq c \leq 10,$ we improve the bound from~\eqref{eq3.1}.
Since $C$ is a maximum clique, $G'$ is also $K_{c+1}$-free.
By Lemma~\ref{Lem:Erdos} (with $k=c+1$), the number of edges in $G'$ is at most 
$\binom{c}{2} \left(\frac{n-c}{c}\right)^2$ 
and the number of triangles in $G'$ is at most 
$\binom{c}{3} \left(\frac{n-c}{c}\right)^3$, which implies that
\begin{equation*}
\mathcal{C}\mathcal{C}_4(G)
\leq 
|\dC_1|\leq
\left(\frac{n-c}{4}\right)^4 +1 +(n-c)+\binom c2\left(\frac{n-c}{c}\right)^2+\binom{c}{3}\left(\frac{n-c}{c}\right)^3.
\end{equation*}
Therefore, we only need to show that
\begin{align}\label{Equ:bound of clique number}
    \left(\frac{n-c}{4}\right)^4 +1 +(n-c)+\binom c2\left(\frac{n-c}{c}\right)^2 +\binom{c}{3}\left(\frac{n-c}{c}\right)^3
< \frac{(n-2)^2(n+2)^2}{256}.
\end{align}
For $6\le c \le 10$ and $n \geq c$, \eqref{Equ:bound of clique number} holds (see Appendix A). 
Thus, we may assume that $4\le c\le 5.$

If $c = 4$, then $G$ is $K_5$-free. 
By Lemma~\ref{Lem:Erdos} (with $k=5,\ t=4$), we have
$\coveropt{4}{G} = k_4(G) \leq h(n,5,4) = k_4(\turangraph{n}{4})$, 
with equality if and only if $G =\turangraph{n}{4},$ which completes the proof of Theorem~\ref{Thm:Main}.

By the preceding analysis, 
to prove Theorem~\ref{Thm:Main} for any $n$-vertex graph $G$, 
we may assume that $\omega(G) = 5$ and the theorem holds for all graphs on fewer than $n$ vertices.

\section{Proof of Theorem~\ref{Thm:Main}}\label{Sec:Proof of main Theorem}
We first establish Theorem~\ref{Thm:Main} for $6 \leq n \leq 104$ with $n \not\in \{97,101\}$ (these base cases will simplify subsequent root computations and monotonicity checks; the proof is given in Appendix B).

Let $\mathcal{N}:= \{n\in \mathbb{N}|\ n\ge 105\ \text{or}\  n\in \{97,101\} \}.$
For the remainder of the proof, 
we suppose that $G$ is an $n$-vertex graph for some $n\in \dN$ such that $\omega(G)=5,$ and
Theorem~\ref{Thm:Main} holds for all graphs on fewer than $n$ vertices.

To prove Theorem~\ref{Thm:Main}, 
we use a similar strategy as before, which was also used in~\cite{DMP}: 
we select a vertex $v$ of minimum degree in $G,$ 
and consider the subgraphs $G\setminus\{v\}$ and $G[N(v)];$ 
we first use inductive hypothesis to bound the $4$-clique cover number of $G\setminus\{v\}$ and then extend it to a $4$-clique cover of $G$ by considering the uncovered $4$-cliques in $G[N(v)\cup\{v\}],$ i.e., the $3$-cliques in $G[N(v)].$
For $t=3$, Dau et al.~\cite{DMP} used Lov\'{a}sz's method~\cite{L68} to bound the edge cover number of $G[N(v)]$ in terms of $\delta(G)$. 
However, for $t=4$, bounding the 3-clique cover number of $G[N(v)]$ requires more than $\delta(G)$. 
To address this, we analyze $G[N(v)]$ using a greedy partition technique and local adjustments, obtaining our key lemma, Lemma~\ref{Lem:key lemma}, and then use it to complete the proof of Theorem~\ref{Thm:Main}.
\medskip

A crucial concept from Dau et al.~\cite{DMP} is the following special partition of vertex-disjoint cliques.  
We call it {\it greedy} because it can be obtained by iteratively applying the following greedy procedure: at each step, select a largest clique in the remaining graph and then delete the vertices of this clique.

\begin{dfn}
A \textit{greedy partition} of a graph $H$ of size $p$, denoted by $\dF_H:= \{A_1, \cdots, A_p\}$,
is a partition of $V(H)$ into $p$ disjoint cliques $A_i$ with maximum size in $H[V(H)\setminus (\cup_{j=1}^{i-1}A_j)]$ for $i\in [p].$ 
\end{dfn}

This greedy partition yields the following structural property (see Lemma 1 in~\cite{DMP}).

\begin{prop}\label{Claim:p<n-delta}
	Let $\dF_H= \{A_1, \cdots, A_p\}$ be a greedy partition of an $n$-vertex graph $H$ of size $p$.
	Then, for any $1\leq i<j \leq p,$
	every vertex in $A_j$ has a non-neighbor in $A_i.$ 
    Moreover, we have 
	$p \leq n-\delta(H).$
\end{prop}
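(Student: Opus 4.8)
The plan is to prove the two assertions in order: first the "non-neighbor" property, which follows directly from the maximality built into the greedy procedure, and then the bound $p \le n - \delta(H)$, which is a counting consequence of the first part applied to the last clique $A_p$.

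For the first assertion, fix $1 \le i < j \le p$ and suppose for contradiction that some vertex $w \in A_j$ is adjacent to every vertex of $A_i$. Since $A_i$ is a clique and $w$ is complete to it, $A_i \cup \{w\}$ is a clique in $H$. Now consider the step of the greedy procedure at which $A_i$ was chosen: at that point we had deleted only $A_1, \dots, A_{i-1}$, so both $A_i$ and the vertex $w$ (which lies in $A_j$ with $j > i$, hence was not yet deleted) are present in $H[V(H) \setminus (\cup_{r=1}^{i-1} A_r)]$. Thus $A_i \cup \{w\}$ is a clique of size $|A_i| + 1$ in that subgraph, contradicting the choice of $A_i$ as a clique of maximum size there. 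Hence every vertex of $A_j$ has a non-neighbor in $A_i$.

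For the bound on $p$, apply the first assertion with $j = p$: every vertex $w \in A_p$ has, for each $i \in [p-1]$, at least one non-neighbor in $A_i$. Since the cliques $A_1, \dots, A_{p-1}$ are pairwise disjoint and disjoint from $A_p$, this exhibits at least $p-1$ distinct non-neighbors of $w$ in $V(H) \setminus A_p$. Also $w$ has $|A_p| - 1$ non-neighbors among... wait — $A_p$ is a clique, so $w$ is adjacent to all of $A_p \setminus \{w\}$; instead, $w$ fails to be adjacent to itself, contributing one more non-neighbor slot. Concretely, $d_H(w) \le (n-1) - (p-1)$: out of the $n-1$ other vertices, at least $p-1$ are non-neighbors of $w$. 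Therefore $\delta(H) \le d_H(w) \le n - p$, which rearranges to $p \le n - \delta(H)$, as desired. \QED

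I expect no genuine obstacle here; the only point requiring a little care is making sure, in the first part, that the vertex $w$ we adjoin to $A_i$ has not already been deleted at the time $A_i$ is selected — this is exactly why the index condition $i < j$ is needed.
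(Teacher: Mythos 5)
Your proof is correct and is exactly the standard argument (the paper itself states this as Proposition~\ref{Claim:p<n-delta} by citing Lemma~1 of~\cite{DMP} without reproving it): maximality of $A_i$ at its selection step forces a non-neighbor of each later vertex in $A_i$, and applying this to a vertex of $A_p$ gives $\delta(H)\le d_H(w)\le n-p$. The only edit needed is to delete the mid-proof ``wait'' digression about non-neighbors inside $A_p$, which is irrelevant to the final (correct) count.
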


Let $v\in V(G)$ be a vertex of minimum degree $d_v=\delta(G).$
Let $G'=G\setminus \{v\}$ and $H=G[N(v)].$
Since $\omega(G)=5,$ 
$G$ is $K_6$-free and thus $H$ is $K_5$-free.
By induction, there exists a $4$-clique cover $\mathcal{C}_1'$ of $G'$ with size 
$|\mathcal{C}_1'|=\mathcal{C}\mathcal{C}_4(G')\le k_4(\turangraph{n-1}{4}).$
Now, we extend $\mathcal{C}_1'$ to a $4$-clique cover of $G$ by adding cliques that cover all the $4$-cliques containing $v,$
which correspond exactly to covering all the triangles in $H$. 
Thus, the number of additional cliques needed is $\coveropt{3}{H}$,
which implies that

\begin{equation}\label{Equ:CC_4(G)<=CC_4(G')+CC_3(H)}
    \coveropt{4}{G}\le \coveropt{4}{G'}+\coveropt{3}{H}\le  k_4(\turangraph{n-1}{4})+ \coveropt{3}{H}.
\end{equation}

To bound $\coveropt{3}{H}$, we use the greedy partition $\mathcal{F}_H$ of $H$ to derive the following key lemma.

\begin{lem}\label{Lem:key lemma}
    Let $H$ be any $K_5$-free graph with $|V(H)| = m$ and $\mathcal{F}_H = \{A_1,\dots,A_p\}$ be any greedy partition of $H$ of size $p$. 
    Then there exists an integer $q$ with $m/4 \leq q \leq p$ such that       
    $$
    \coveropt{3}{H}\le \frac{59}{2}q^3 -24q^2m +\frac{13}{2}qm^2 -\frac{5}{9}m^3+Q_0,
    $$
where 
$Q_0=\max\{Q_1,Q_2,Q_3\}$
    and 
         $Q_1=\frac{13}{2}q^2-\frac{17}{6}qm+\frac{2}{9}m^2+\frac{1}{3}q-\frac{1}{9}m+\frac{4}{9},$
         $Q_2=-23q^2+\frac{79}{6}qm-\frac{35}{18}m^2+\frac{11}{2}q-\frac{11}{6}m,$
         $Q_3=-\frac{105}{2}q^2+\frac{175}{6}qm-\frac{37}{9}m^2+\frac{92}{3}q-\frac{80}{9}m-\frac{16}{3}.$
\end{lem}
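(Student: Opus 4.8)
The plan is to bound $\coveropt{3}{H}$ by building an explicit $3$-clique cover of $H$ from the greedy partition $\dF_H = \{A_1,\dots,A_p\}$, counting the cliques needed, and then optimizing over the free parameters of the partition. First I would fix notation: write $a_i = |A_i|$, so $\sum_{i=1}^p a_i = m$, and since $H$ is $K_5$-free each $a_i \in \{1,2,3,4\}$; let $q$ be the number of parts $A_i$ with $a_i \ge 2$ (equivalently, $q$ is the number of ``nontrivial'' blocks). The lower bound $q \ge m/4$ follows because even the largest blocks have size $4$, so $m = \sum a_i \le 4q$ if we are careful (more precisely one argues $p$ blocks of size $\le 4$ force enough nontrivial ones; the bound $q \le p$ is immediate). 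A triangle of $H$ either lies inside a single block $A_i$ (cheap: at most $\binom{a_i}{3} \le \binom{4}{3}=4$ cliques per block, and these are automatically covered by $A_i$ itself since $A_i$ is a clique), or it uses vertices from at least two blocks. So the real work is covering the ``crossing'' triangles, and I would stratify them by the pattern of how many vertices they take from the first, second, third block they meet, using Proposition~\ref{Claim:p<n-delta} (every vertex of $A_j$ has a non-neighbor in each earlier $A_i$) to restrict which crossing configurations can occur.

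Next I would set up the counting of crossing triangles and the cliques covering them. For a triangle meeting exactly two blocks $A_i, A_j$ ($i<j$), say with $s$ vertices in $A_i$ and $3-s$ in $A_j$, I would cover it by a maximal clique of $H[A_i \cup A_j]$ containing it; the number of such covering cliques is bounded by something like $a_i a_j \cdot (\text{constant})$, where the constant is small because $H[A_i \cup A_j]$ is $K_5$-free and Proposition~\ref{Claim:p<n-delta} forbids $A_j$'s vertices from being complete to $A_i$. For triangles meeting three blocks $A_i,A_j,A_k$, one vertex per block, the cover cost is at most $a_i a_j a_k$ (one clique per triangle in the worst case, or better). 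Summing these contributions gives a bound of the shape $\sum_{i<j} c\, a_i a_j + \sum_{i<j<k} a_i a_j a_k$ plus the within-block term, and the point is to convert these symmetric-function sums into expressions in $m$, $p$, $q$, and $\sum a_i^2$ (or $\sum \binom{a_i}{2}$). Here $e_1 = \sum a_i = m$, and $e_2 = \sum_{i<j} a_i a_j = (m^2 - \sum a_i^2)/2$, $e_3 = \sum_{i<j<k} a_i a_j a_k$; bounding these requires controlling $\sum a_i^2$ and $\sum a_i^3$, which is where $q$ enters—since blocks are of size $\le 4$, one gets $\sum a_i^2 \le$ linear combinations of $m$ and $q$ by a convexity/extreme-point argument (the extremal configurations have all nontrivial blocks of size $4$ or size $2$, which is exactly why the final bound is piecewise and $Q_0 = \max\{Q_1,Q_2,Q_3\}$ appears as the three cases).

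After that, I would assemble everything. Each term $\frac{59}{2}q^3$, $-24q^2 m$, $\frac{13}{2}qm^2$, $-\frac59 m^3$ should emerge as the ``bulk'' contribution from the $e_2$ and $e_3$ sums after substituting the worst-case relations among $m$, $q$, and $\sum a_i^2$; the three quadratic-in-$(q,m)$ candidates $Q_1, Q_2, Q_3$ arise from the three regimes of how the leftover ``slack'' $4q - m$ distributes among block sizes (all-size-$4$ versus mixing in size-$2$ or size-$3$ blocks), and taking the max handles all of them uniformly. I would verify the inequality $m/4 \le q \le p$ explicitly, verify that the chosen greedy partition's block-size multiset is indeed dominated (in the relevant convexity order) by one of the three extremal multisets, and then check the resulting algebraic inequality holds for the range of $m$ forced by $H$ being a neighborhood in a graph from $\dN$.

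\medskip

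The main obstacle I expect is the crossing-triangle bookkeeping: getting a tight enough constant $c$ in the $\sum_{i<j} c\, a_i a_j$ term. A naive bound (cover each crossing triangle by its own $3$-clique) gives $c$ too large and the final polynomial will not be below $k_4(\turangraph{n}{4})$ when plugged back into~\eqref{Equ:CC_4(G)<=CC_4(G')+CC_3(H)}; the savings must come from \emph{local adjustments}—replacing many small covering cliques by a few larger ones (of size $4$) inside each $H[A_i \cup A_j \cup A_k]$, exploiting that $H$ is $K_5$-free so such $4$-cliques cover $\binom{4}{3}=4$ triangles each. Carefully accounting for how much consolidation is guaranteed by Proposition~\ref{Claim:p<n-delta}'s non-neighbor condition, without over-counting when the same $4$-clique is reused across different block-triples, is the delicate part; I anticipate this is where the cubic-in-$q$ terms and the precise rational coefficients in $Q_1,Q_2,Q_3$ really get pinned down, and it will require splitting into cases based on the sizes $(a_i, a_j, a_k) \in \{1,2,3,4\}^3$ and treating each adjacency pattern allowed by the greedy property separately.
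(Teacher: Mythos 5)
Your outline reproduces the skeleton of the paper's Subsections~4.1--4.2 (build an explicit cover from the greedy partition, stratify triangles by how many blocks they meet, then optimize over block-size sequences, with the three $Q_i$'s corresponding to three extremal sequences), but it stops short of the step that actually produces the stated coefficients. The construction-plus-optimization argument alone yields the weaker bound $28q^3-\frac{45}{2}q^2m+6qm^2-\frac{1}{2}m^3+R_0$ (the paper's Lemma~\ref{Lem:adjustment method}); the gap between that and the claimed $\frac{59}{2}q^3-24q^2m+\frac{13}{2}qm^2-\frac{5}{9}m^3+Q_0$ is of order $\frac{1}{18}a^3$ where $a$ is the number of size-$4$ blocks, and closing it requires a genuinely separate idea: an auxiliary $3$-uniform hypergraph on the size-$4$ blocks whose hyperedges are the triples inducing $\turangraph{12}{4}$, together with a dichotomy. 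If many triples fail to induce $\turangraph{12}{4}$, one pair of blocks in each such triple has at most $11$ crossing edges and the cover cost drops by $1$ per triple; if most triples do induce $\turangraph{12}{4}$, the hypergraph is dense, Moon--Moser (Lemma~\ref{Lem:MM}) applied to link graphs forces many $K_4^{(3)}$'s, each $K_4^{(3)}$ corresponds to a $\turangraph{16}{4}$ coverable by $24$ rather than $48$ cliques, and a packing argument (each $K_4^{(3)}$ conflicts with at most $4(a-2)$ others sharing more than two vertices) makes these savings disjoint. You correctly flag the consolidation/over-counting issue as the delicate part, but flagging it is not resolving it; without this dichotomy the bound you can prove is too weak --- in the application the case $n=4k+1$, $q=k$ is an exact equality, so the extra $\Theta(a^3)$ slack cannot be absorbed.

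There is also a concrete error in your setup: you define $q$ as the number of blocks of size at least $2$, and then the claimed inequality $q\ge m/4$ is false (take one block of size $4$ and one singleton: $m=5$, $q=1<5/4$). In the paper $q$ is the length of a \emph{reduced} sequence obtained by merge operations (absorbing $1$'s into other blocks, promoting $2$'s), which only shortens the sequence (so $q\le p$) and after which every entry is at most $4$ (so $m\le 4q$). You would need this reduction step --- together with a verification that each operation does not decrease the cover-size functional --- to make your parameter $q$ satisfy the lemma's constraints at all.
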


\subsection{Complete the proof of Theorem~\ref{Thm:Main}}\label{Subsec:proof of main thm}
We prove Theorem~\ref{Thm:Main} assuming Lemma~\ref{Lem:key lemma} holds, and postpone the proof of the lemma to Section~\ref{Sec:Proof of key lemma}.

\noindent
\textbf{Proof of Theorem \ref{Thm:Main}.}
By the analysis in Subsection~\ref{Sec: Reducing to omega(G)=5} and the beginning of Section~\ref{Sec:Proof of main Theorem}, 
$G$ is an $n$-vertex graph for some $n\in \dN$,
$\omega(G)=5,$ and
Theorem~\ref{Thm:Main} holds for all graphs on fewer than $n$ vertices.
To prove that Theorem~\ref{Thm:Main} holds for $G$, 
we need to show that 
$\coveropt{4}{G} < \coveropt{4}{\turangraph{n}{4}}=k_4(\turangraph{n}{4}).$ 
From the definitions of $v$ and $H$, by \eqref{Equ:CC_4(G)<=CC_4(G')+CC_3(H)}, it is sufficient to prove that

\begin{equation}\label{Equ:cc_3(H)<k_4(\turangraph{n}{4})-k_4(\turangraph{n-1}{4})}
    \coveropt{3}{H}< k_4(\turangraph{n}{4})-k_4(\turangraph{n-1}{4}).
\end{equation}

Since $v$ is a vertex of minimum degree, for every $w \in N(v),$ we have
$d_H(w) \geq d(w)-(n-d(v))\geq d(v)-(n-d(v)) =2d(v)-n.$
Thus $\delta(H) \geq 2d(v) -n.$
Let $\dF_H= \{A_1, \cdots, A_p\}$ be a greedy partition of $H$ of size $p$.
Note that $|V(H)|=d(v),$ by Proposition~\ref{Claim:p<n-delta}, we have

\begin{equation}\label{Equ:p<n-d(v)}
p\le |V(H)|-\delta(H)\le d(v)-(2d(v)-n)=n-d(v).
\end{equation}

For convenience, we use $d$ to denote $d(v)$ in the following of the proof.
We will complete the proof according to the range of values of $d.$
Since  $G$ is $K_6$-free, 
from the classical Tur\'{a}n Theorem,
$\frac 1 2 n d \le e(G)\leq \frac{2}{5}n^2,$
which implies that $d\leq \frac{4}{5}n.$
\medskip

If $d\leq \lfloor \frac{3}{4}n \rfloor-1,$ 
by the result of Dau et al. (see Theorem 3 in~\cite{DMP}) and Lemma~\ref{Lem:Erdos},
$$
\coveropt{3}{H}\leq \coveropt{3}{\turangraph{|H|}{3}}\le 
\frac{|V(H)|^3}{27}  
\leq \frac{\left(\left\lfloor \frac{3n}{4} \right\rfloor-1\right)^3}{27}  
< \frac{(n-1)^3}{64}
\le k_4(\turangraph{n}{4})-k_4(\turangraph{n-1}{4}),
$$
where the fourth inequality holds for $n\ge 1,$
the last inequality holds for $n\ge 4,$ and we are done.

If $d = \lfloor \frac{3}{4}n \rfloor,$ 
similarly by the above process again, we have
$$
\coveropt{3}{H}\leq \coveropt{3}{\turangraph{|H|}{3}}=k_3(\turangraph{|H|}{3}) 
= k_4(\turangraph{n}{4})-k_4(\turangraph{n-1}{4}),
$$
the last equality holds because for any $n$
we have $k_4(\turangraph{n}{4})-k_4(\turangraph{n-1}{4})=k_3\left(\turangraph{\lfloor \frac{3}{4}n \rfloor}{3}\right).$ 
By \eqref{Equ:CC_4(G)<=CC_4(G')+CC_3(H)}, we have 
$\coveropt{4}{G}\le \coveropt{4}{G'}+\coveropt{3}{H}\le \coveropt{4}{\turangraph{n}{4}}.$
However, if all the equalities hold, then $G$ would be $\turangraph{n}{4},$ contradicting $\omega(G)=5.$
Therefore, we obtain $\coveropt{4}{G}< \coveropt{4}{\turangraph{n}{4}},$ and we are done.

If $\lfloor \frac{3}{4}n \rfloor +1\leq d \leq \frac{4}{5}n,$
then 
$\frac{5}{4}d\le n< \frac 43d.$
By Lemma~\ref{Lem:key lemma} and \eqref{Equ:p<n-d(v)}, there is an integer $q$ satisfying
\begin{align}\label{Equ:range of q}
    \frac 1 4 d\le q\le p\le n-d<\frac 1 3 d,
\end{align}
such that
$$\coveropt{3}{H}\le \frac{59}{2}q^3 -24q^2d +\frac{13}{2}qd^2 -\frac{5}{9}d^3+Q_0,$$
where $Q_0=\max\{Q_i\}_{i=1}^3$
and   $Q_1=\frac{13}{2}q^2-\frac{17}{6}qd+\frac{2}{9}d^2+\frac{1}{3}q-\frac{1}{9}d+\frac{4}{9},$
       $Q_2=-23q^2+\frac{79}{6}qd-\frac{35}{18}d^2+\frac{11}{2}q-\frac{11}{6}d,$ and
       $Q_3= -\frac{105}{2}q^2+\frac{175}{6}qd-\frac{37}{9}d^2+\frac{92}{3}q-\frac{80}{9}d-\frac{16}{3}.$

Assume that $Q_0=Q_1$ (the other two cases are analogous, see Appendix C).
Then, we have
\begin{align}\label{Equ:CC3(H)<F(q,d)}
    \coveropt{3}{H} \leq	
    \frac{59}{2}q^3 -24q^2d +\frac{13}{2}qd^2 -\frac{5}{9}d^3
    +\frac{13}{2}q^2-\frac{17}{6}qd+\frac{2}{9}d^2+\frac{1}{3}q-\frac{1}{9}d+\frac{4}{9}:=F(q,d). 
\end{align}
The partial derivative $F'_d(q,d)=-24q^2+13qd-\frac 5 3 d^2-\frac {17} 6 q + \frac 4 9 d - \frac 1 9.$
By~\eqref{Equ:range of q}, $3q<d\le 4q.$
It is not hard to see that
$F'_d(q,3q)=-\frac 3 2 q - \frac 1 9 <0,$
$F'_d(q,3q+1)=\frac{3}2 q-\frac{4}{3}>0,$ and
$F'_d(q,4q)=\frac 4 3 q^2-\frac {19}{18} q - \frac 1 9>0$
holds for $q\geq 1.$
Therefore, $F(q,d)$ is increasing with respect to $d.$ 

If 
$q<\frac n 5,$
then $$F(q,d)\le F(q,4q)=\frac{35}{18}q^3-\frac{23}{18}q^2-\frac{1}{9}q+\frac 4 9<
\frac{7}{450}n^3-\frac{23}{450}n^2+\frac{1}{45}n+\frac 4 9<\frac{1}{64}(n-1)^3,$$
where the second inequality holds since the function $F(q,4q)$ increases for $q\ge1,$
and the last inequality holds when $n\ge 8.$
Thus, $\coveropt{3}{H} \le F(q,d) < \frac{1}{64}(n-1)^3\le k_4(\turangraph{n}{4})-k_4(\turangraph{n-1}{4}),$ and we are done.

If 
$q\ge \frac n 5,$ since $d\le n-q,$ we have
\begin{equation}\label{Equ:g(q)}
    F(q,d)\le F(q,n-q)=\frac{545}{9}q^3-\frac{116}{3}q^2n+\frac{49}{6}qn^2-\frac{5}{9}n^3+\frac{86}{9}q^2-\frac{59}{18}qn+\frac{2}{9}n^2+\frac{4}{9}q-\frac{1}{9}n+\frac 4 9.
\end{equation}
Let the result equation be $g(q).$ 
Then $g'(q)=\frac{545}{3}q^2-\frac{232}{3}qn+\frac{49}{6}n^2+\frac{172}{9}q-\frac{59}{18}n+\frac{4}{9}.$
Since $\frac n 5\le q\le n-\lfloor\frac{3}{4}n\rfloor-1=\lceil\frac{n}{4}\rceil-1<\frac n 4,$
it is easy to see that 
$g'(0)=\frac{49}{6}n^2-\frac{59}{18}n+\frac{4}{9}>0,$
$g'(\frac n 5)=-\frac{1}{30}n^2+\frac{49}{90}n+\frac{4}{9}<0,$ and
$g'(\frac n 4)=\frac{3}{16}n^2+\frac{2}{3}n+\frac{4}{9}>0,$
when $n\ge 18.$
Thus, $g(q)$ is decreasing first and then increasing with respect to $q$
and the maximum of $g(q)$ will take value at the endpoints of the interval containing $q,$ i.e., 
$g(q)\le \max\{g(\frac n 5),g(\lceil\frac{n}{4}\rceil-1)\}.$
Note that 
$g(\frac n 5)=F(\frac n 5,\frac {4n} 5)=\frac{7}{450}n^3-\frac{23}{450}n^2+\frac{1}{45}n+\frac 4 9<\frac{1}{64}(n-1)^3$ holds for $n\ge 8.$
And 
$g(\frac{n}{4}-2)=\frac{1}{64}n^3-\frac{3}{8}n^2+24n-\frac{1340}{3}<\frac{1}{64}(n-1)^3$ holds for all $n.$
Therefore, if $q\le \frac n 4-2,$ 
then $\coveropt{4}{G}\le \coveropt{4}{G'}+\coveropt{3}{H}<\coveropt{4}{\turangraph{n}{4}},$ 
and we are done.

Now, the only remaining case is $q=\lceil\frac{n}{4}\rceil-1.$
In this situation, we have 
$\delta(G)=d=n-q=\lfloor \frac{3}{4}n \rfloor +1.$
We will analyze more carefully depending on the residue of $n$ modulo $4.$ 
By~\eqref{Equ:g(q)}, we have
\begin{align*}
(i).\,  n=4k,\, q=k-1: \quad & g(q)=k^3-3k^2-\frac{329}{3}k-51 < k^3 = k_4(\turangraph{4k}{4}) - k_4(\turangraph{4k-1}{4}), \\
(ii).\, n=4k+1,\, q=k: \quad & g(q) = k^3 = k_4(\turangraph{4k+1}{4}) - k_4(\turangraph{4k}{4}), \\
(iii).\, n=4k+2,\, q=k: \quad & g(q) = k^3+3k-\frac{10}{3}<k^3+k^2 < k^3 + k^2 = k_4(\turangraph{4k+2}{4}) - k_4(\turangraph{4k+1}{4}), \\
(iv).\, n=4k+3,\, q=k: \quad & g(q) = k^3+9k-\frac{116}{9} < k^3 + 2k^2 + k = k_4(\turangraph{4k+3}{4}) - k_4(\turangraph{4k+2}{4}).
\end{align*}
When $n=4k+1,q=k$, if all the equalities hold,
then by induction, $G'$ is isomorphic to $\turangraph{4k}{4}.$
Since $\delta(G)=d=3k+1,$
we conclude that $v$ is adjacent to all vertices of $G',$ 
contradicting that $v$ has the minimum degree.
Therefore, we have proved that when 
$\lfloor \frac{3}{4}n \rfloor +1\leq d \leq \frac{4}{5}n,$
$\coveropt{3}{H}< k_4(\turangraph{n}{4})-k_4(\turangraph{n-1}{4}).$
Combining all the cases with~\eqref{Equ:CC_4(G)<=CC_4(G')+CC_3(H)}, 
when $\omega(G)=5,$ we have 
$$\coveropt{4}{G}\le \coveropt{4}{G'}+\coveropt{3}{H}<k_4(\turangraph{n-1}{4})+k_4(\turangraph{n}{4})-k_4(\turangraph{n-1}{4})=\coveropt{4}{\turangraph{n}{4}},$$
which completes the proof of Theorem~\ref{Thm:Main}.
\QED

\section{Proof of Lemma~\ref{Lem:key lemma}}\label{Sec:Proof of key lemma}
In this section, we complete the proof of Lemma~\ref{Lem:key lemma}.
Our proof strategy proceeds in the following three steps. 
First, in Subsection~\ref{Sec:greedy sequence and a general bound}, we generalize Lov\'asz's method~\cite{L68} to construct a 3-clique cover of $H$ using its greedy partition $\mathcal{F}_H$, and reformulate the cover size using the size of its corresponding \textit{greedy sequence} (Definition~\ref{Dfn:greedy sequence}).
Next, in Subsection~\ref{Sec:Optimize the values of greedy sequences}, we analyze extremal properties of these sequences.
Finally, in Subsection~\ref{Sec:Proof of better upperbound lemma}, we examine the subgraph induced by vertices of 4-cliques in $\mathcal{F}_H$, using its combinatorial properties to refine our bound and establish Lemma~\ref{Lem:key lemma}.

\subsection{3-clique cover construction}\label{Sec:greedy sequence and a general bound}
We define \textit{greedy sequence} and its value as follows. 

\begin{dfn}\label{Dfn:greedy sequence}
An \textit{$(m,p)$-greedy sequence} $A = (a_1, \dots, a_p)$ satisfies that $a_i\in\{1,2,3,4\}$ for $i\in [p],$ $a_i\ge a_{i+1}$ for $i\in [p-1]$, and $\sum_{i=1}^p a_i=m.$ 
Let $a = |\{i\in [p] : a_i = 4\}|$, $b = |\{i\in [p] : a_i \geq 3\}|$, $c = |\{i\in [p] : a_i \geq 2\}|$.
Then, the \textit{value} of $A$ is denoted by $f(A):=S_1(A)+S_2(A)+S_3(A),$ where $S_1(A)=b,$ $S_2(A)=mb+a^2-ab-b^2+bc-a-3b,$ and 
\begin{align*}
    S_3(A)=&\sum_{k=3}^{b} a_k \sum_{j=2}^{k-1} (a_j-1) (j-1) +\sum_{k=b+1}^{c} a_k
           \left( \sum_{j=2}^{b}(a_j-1)(j-1) + \sum_{j=b+1}^{k-1}(a_j-1)b \right)+ \\
           &\sum_{k=c+1}^{p} \left( \sum_{j=2}^{b}(a_j-1)(j-1) + \sum_{j=b+1}^{c}(a_j-1)b \right).
\end{align*}
\end{dfn}
\noindent
Since $H$ is $K_5$-free in Lemma~\ref{Lem:key lemma}, if $\dF_H=\{A_1,\cdots,A_p \}$ is a greedy partition of $H$ of size $p$,
then $A=\{|A_i|\}_{i=1}^{p}$ is an $(|H|,p)$-greedy sequence which we call the corresponding greedy sequence of $\dF_H.$


\begin{lem}\label{Lem:Bound of 3-clique cover by f(A)}
     If $H$ is a $K_5$-free graph and $\dF_H=\{A_1,\cdots,A_p \}$ is a greedy partition of $H$ of size $p$.  
     Let $A=\{|A_i|\}_{i=1}^{p}$ be the corresponding $(|H|,p)$-greedy sequence of $\dF_H$.
     Then, we have 
	$\coveropt{3}{H}\leq f(A).$
\end{lem}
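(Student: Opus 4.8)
The plan is to construct an explicit $3$-clique cover of $H$ by processing the greedy partition $\dF_H = \{A_1,\dots,A_p\}$ in order, and to account for the cliques used in three groups whose sizes will match $S_1(A)$, $S_2(A)$, and $S_3(A)$ respectively. First I would fix the ordering of the parts so that $|A_1| \ge |A_2| \ge \cdots \ge |A_p|$ (consistent with the sequence $A$ being non-increasing), and recall from Proposition~\ref{Claim:p<n-delta} the crucial greedy property: for $i < j$, every vertex of $A_j$ has a non-neighbor in $A_i$. This, together with $K_5$-freeness (so every $A_i$ has size at most $4$), is the only structural input. The triangles of $H$ split according to how their three vertices distribute among the parts: (type I) all three in one part $A_i$; (type II) two in some $A_i$ and one in some $A_j$; (type III) spread across three distinct parts, or two in one part and one in another in the ``crossing'' pattern — more precisely I would organize type II and III by the index pattern of the vertices.

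The key steps, in order: (1) \emph{Type I cliques.} Each $A_i$ with $|A_i| \ge 3$ contributes the single clique $A_i$ itself, covering all triangles inside $A_i$; there are exactly $b = |\{i : a_i \ge 3\}|$ such cliques, giving the term $S_1(A) = b$. (2) \emph{Type II cliques.} For a triangle with two vertices $u,u' \in A_i$ and a third vertex $w \in A_j$ (either $i<j$ or $i>j$), I would cover these greedily: for each edge $uu'$ inside a part $A_i$ and each \emph{other} part $A_j$, the common neighbors of $\{u,u'\}$ in $A_j$ together with $\{u,u'\}$ need not form a clique, so I would instead iterate over vertices $w$ and edges, using at most a controlled number of cliques per pair of parts. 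The bookkeeping here — counting how many cliques are needed as a function of the part sizes and of how many parts precede a given part — is what produces the somewhat mysterious quadratic expression $S_2(A) = mb + a^2 - ab - b^2 + bc - a - 3b$; I expect I would bound, for each part $A_j$, the number of ``two-in-$A_i$, one-in-$A_j$'' cliques needed by something like $(|A_j|-1)$ times the number of suitable edges in earlier parts, plus a correction, and the non-neighbor property of Proposition~\ref{Claim:p<n-delta} is what keeps cliques from growing past $K_4$. (3) \emph{Type III cliques.} For triangles meeting three distinct parts $A_i, A_j, A_k$ (or the remaining crossing patterns), I would again cover edge-by-edge: each edge $xy$ with $x \in A_i$, $y \in A_j$ extends by its common neighbors in later parts, and the number of cliques needed to mop up the third vertex is governed by the multiplicities $(a_j - 1)(j-1)$ for early parts and $(a_j-1)b$ once we are past the parts of size $\ge 3$ — which is precisely the structure of the triple sum $S_3(A)$, split at the thresholds $b$ and $c$.

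Concretely I would set up the cover as follows: process parts $A_1, \dots, A_p$ in order; when processing $A_j$, add the clique $A_j$ if $|A_j| \ge 3$ (type I), then for each ordered interaction with an earlier part or pair of earlier structures, add one clique per ``seed'' edge or vertex-plus-edge configuration, each seed extended to its set of common neighbors within the parts already processed; the greedy/non-neighbor property guarantees each such extension, together with its seed, is a clique of size $\le 4$, so it is a legitimate clique of $H$, and by construction every triangle of $H$ is contained in at least one clique of the collection. Summing the counts over all parts and matching terms gives $\coveropt{3}{H} \le S_1(A) + S_2(A) + S_3(A) = f(A)$.

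The main obstacle I anticipate is step (2)–(3): getting the counting \emph{tight enough} to land exactly on the stated polynomials $S_2$ and $S_3$, rather than a looser bound. The subtlety is that a single clique can cover many triangles at once (all triangles through a fixed edge that lie in already-processed parts), so the efficient move is to index cliques by edges (or by an edge plus one more vertex) rather than by triangles, and then carefully decide \emph{which} part's perspective ``owns'' each clique to avoid double counting across the type-II/type-III boundary and across the thresholds $a \le b \le c \le p$. I would expect to need the inequality $|A_j| \le |A_i|$ for $i < j$ repeatedly to collapse sums, and to treat the size-$4$, size-$3$, size-$2$, and size-$1$ parts as four regimes, which is exactly why the definition of $f(A)$ is stratified by $a,b,c,p$. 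Once the cover is specified, verifying it is a valid $3$-clique cover is routine; the real work is the arithmetic reconciling the clique count with $f(A)$, and I would relegate the heaviest of that to a careful case-by-case summation.
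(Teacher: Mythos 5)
Your plan follows the same architecture as the paper's proof: classify triangles by how many parts of $\dF_H$ they meet, cover the one-part triangles by the $b$ parts of size at least $3$, and cover the two-part and three-part triangles by ``seed'' vertices or edges extended by their common neighbors inside a single clique part, with the non-neighbor property of Proposition~\ref{Claim:p<n-delta} controlling the counts. So the approach is right. But as written the proposal stops short of the lemma's actual content, which is that a specific construction has size \emph{exactly} $S_1(A)+S_2(A)+S_3(A)$; you explicitly defer ``the arithmetic reconciling the clique count with $f(A)$,'' and that arithmetic is not routine bookkeeping layered on top of an already-complete argument --- it dictates \emph{which} covering scheme to use in each regime, so the construction itself is not fully specified until the counting is done.

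Three concrete points. First, your stated worry that for $u,u'\in A_i$ the set $\{u,u'\}$ together with its common neighbors in $A_j$ ``need not form a clique'' is unfounded: $A_j$ is a clique, so $T_{A_j}[u,u']$ is always a clique, and this is precisely why all seeds are extended into a \emph{single} part. Second, for triangles meeting three parts $A_i,A_j,A_k$ with $i<j<k$, the paper takes as seeds the edges between the two \emph{later} parts $A_j,A_k$ and extends each into the earlier part $A_i$; the non-neighbor property bounds the number of $A_j$--$A_k$ edges by $a_k(a_j-1)$, and summing the choice of $A_i$ over $i<j$ produces the factor $(j-1)$ in $S_3$. Your phrasing (extend an $A_i$--$A_j$ edge ``by its common neighbors in later parts'') reverses this and leads to a different sum, roughly $\sum_{i<j} a_j(a_i-1)(p-j)$, which does not match $S_3$. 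Third, for two-part triangles with one vertex in $A_i$ and two in $A_j$ ($i<j$), hitting $S_2$ exactly requires choosing between two schemes --- $a_i$ cliques of the form $T_{A_j}[v]$, $v\in A_i$, versus $\binom{a_j}{2}$ cliques of the form $T_{A_i}[u,v]$, $uv\in E(A_j)$ --- according to which is cheaper (the first when $i<j\le a$, the second otherwise), and one also needs the greedy facts that $H[A_{b+1}\cup\cdots\cup A_p]$ is triangle-free and $H[A_{c+1}\cup\cdots\cup A_p]$ is edgeless to truncate the sums at $b$ and $c$. These choices and the resulting summations are where the lemma lives, and they need to be carried out rather than anticipated.
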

\begin{proof}
We prove the lemma by constructing a 3-clique cover $\dC$ of $H$ of size at most $f(A).$

Let $a_i = |A_i|$, and let $a,b,c$ be as in Definition~\ref{Dfn:greedy sequence}. 
For $i\in[3],$ let $\dS_i$ be the triangles with three vertices lying in exactly $i$ different $A_j$'s.
First, we can easily see that  $\dC_1:=\bigcup_{i}\{A_i: a_i\geq 3\}$ is a set of cliques that covers all the triangles in $\dS_1.$
Thus, we have $|\dC_1|=|\cup_{i}\{A_i: a_i\geq 3\}|=b=S_1(A).$

For a clique $C$ in $H$ and a vertex set $U\subset V(H)$, recall that $T_U[C]$ is the induced subgraph of $H$ on vertex set $C\cup U',$ where $U'$ is the common neighbor of the vertices in $C$ in $U.$   
Let $\mathcal{I}_1=\{(i,j): 1\le i<j\le p\ \text{and}\ i\le b \},$
    $\mathcal{I}_2=\{(i,j): 1\le i<j\le a\},$ 
    $\mathcal{I}_3=\{(i,j): 1\le i<j\le p,\ i\le b\ \text{and}\ j>a\},$ and 
\begin{align*}
\dC_2=\bigcup\limits_{(i,j)\in\mathcal{I}_1} \{T_{A_i}[v]: v\in A_j\}
      \bigcup\limits_{(i,j)\in\mathcal{I}_2} \{T_{A_j}[v]: v\in A_i\}
      \bigcup\limits_{(i,j)\in\mathcal{I}_3} \{T_{A_i}[u,v]: uv \in E(A_j)\}.
\end{align*}
We claim that $\dC_2$ covers all the triangles in $\dS_2.$ 
Indeed, by 
the definition of $\dF(H)$, $H[A_{b+1}\cup\cdots\cup A_p]$ is triangle-free.
Thus, the first union set covers all the triangles with one vertex lying in $A_j$ and the other two lying in $A_i$ with $1\leq i<j\leq p$, and the size of the union set is 
\begin{align*}
&\sum_{j=1}^ba_j(j-1)+\sum_{j=b+1}^pa_jb=3\sum_{j=1}^b (j-1)+\sum_{j=1}^a (j-1)+b\sum_{j=b+1}^pa_j \nonumber \\
=&\frac{3}{2}b(b-1)+\frac{1}{2}a(a-1)+b(m-a-3b)=
mb+\frac{a^2}{2}-ab-\frac{3}{2}b^2-\frac {a}{2}-\frac{3}{2}b.
\end{align*} 
Note that the remaining triangles in $\dS_2$ has one vertex lying in $A_i$ and the other two lying in $A_j$ with some $1\le i<j\le p$.
If $|A_i|<|E(A_j)|,$ then we use the second union set to cover them,
and the size of the union set is
$\sum_{i=1}^{a}a_i(a-i)=4\sum_{i=1}^{a}(a-i)=2a^2-2a.$
If $|A_i|\ge |E(A_j)|,$ then we use the third union set to cover them,
and the size of the union set is
\begin{align*}
\sum_{j=a+1}^b\binom{a_j}{2}(j-1)+\sum_{j=b+1}^c\binom{a_j}{2}b
=3\sum_{j=a+1}^b (j-1)+\sum_{j=b+1}^c b=-\frac{3}{2}a^2+ \frac{1}{2}b^2 +bc +\frac 3 2a-\frac 3 2 b.
\end{align*}
Thus, $\dC_2$ is a set of cliques that covers all the triangles in $\dS_2$ (note that $\dC_2$ may contain vertices or edges).
Combining the above three equalities, we get that 
\begin{align*}
    |\dC_2|= mb+a^2-ab-b^2+bc-a-3b=S_2(A).
\end{align*}
    
In a similar way, to cover all the triangles in $\dS_3,$ we define
\begin{align*}
\dC_3=\bigcup\limits_{i,j,k} \{T_{A_i}[u,v]: u\in A_j, v\in A_k, uv\in E(H),\ 1\le i< j<k\le p\}.
\end{align*}
Note that $\dC_3$ may contain edges.
By Proposition~\ref{Claim:p<n-delta}, 
for any $j<k$ and $v\in A_k$, $v$ has at most $a_j-1$ neighbors in $A_j.$   
By the definition of $\dF(H),$ $H[A_{b+1}\cup\cdots\cup A_p]$ is triangle-free and $H[A_{c+1}\cup\cdots\cup A_p]$ is empty, according to the range of $i,j,k$, 
we have
\begin{align}\label{Equ:upper bound on C3}
|\dC_3|\leq &\sum_{k=3}^{b} a_k \sum_{j=2}^{k-1} (a_j-1) (j-1)
+\sum_{k=b+1}^{c} a_k
\left( \sum_{j=2}^{b}(a_j-1)(j-1) + \sum_{j=b+1}^{k-1}(a_j-1)b \right)\nonumber \\
+&\sum_{k=c+1}^{p} \left(  \sum_{j=2}^{b}(a_j-1)(j-1) + \sum_{j=b+1}^{c}(a_j-1)b \right)=S_3(A).
\end{align}

Let $\dC=\dC_1\cup \dC_2 \cup \dC_3\backslash D,$ where $D=\{V(H)\cup E(H)\}.$
Then, by the analysis above and the equations 
$|\dC_1|=S_1(A)$, $|\dC_2|=S_2(A),$ and $|\dC_3|\le S_3(A)$, 
we get that $\dC$ is a $3$-clique cover of $H$ of size at most $f(A)$, which completes the proof of Lemma~\ref{Lem:Bound of 3-clique cover by f(A)}. 
\end{proof}

\subsection{Greedy sequence analysis}\label{Sec:Optimize the values of greedy sequences}
In this subsection, for the $(m,p)$-greedy sequence $A=\{a_i\}_{i=1}^p$, we will use the adjustment operation in the following lemma to preserve the sum $m$ while increasing $f(A)$. 
Indeed, we can determine the structure of the greedy sequence that achieves the maximum value.

\begin{lem}\label{Lem:adjustment method}
	For any $(m,p)$-greedy sequence $A$, there exists an integer $q\le p$ and an $(m,q)$-greedy sequence $A'$ such that 
    $$f(A)\le f(A')\le 28q^3-\frac{45}{2}q^2m+6qm^2-\frac{1}{2}m^3+R_0,$$
    where $R_0=\max\{R_i\}_{i=1}^3$ with 
    $R_1=\frac{3}{2}qm-\frac{1}{2}m^2-3q+m$,
    $R_2=-28q^2+\frac{33}{2}qm-\frac{5}{2}m^2+6q-2m$ and 
    $R_3=-56q^2+\frac{63}{2}qm-\frac{9}{2}m^2+34q-10m-6.$ 
\end{lem}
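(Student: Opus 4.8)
The plan is to reformulate $f$ as a function of the \emph{profile} of a greedy sequence — the counts $(a,b,c)$ together with the length $p$, equivalently the numbers of $4$'s, $3$'s, $2$'s and $1$'s — and then to maximise it over all admissible profiles. Since each $a_i$ is constant on each of the four blocks, $S_1$, $S_2$ and the nested triple sum $S_3$ are all explicit polynomials in $a,b,c,p$ (with $m=a+b+c+p$), so $f$ depends on $A$ only through its profile; the feasible profiles are the integer points of a tetrahedron whose four corners are the pure sequences $1^m$, $2^{m/2}$, $3^{m/3}$, $4^{m/4}$. A direct computation shows $f$ vanishes at the first two corners and equals $m^3/27$ and roughly $m^3/32$ at the last two, so the optimum is $3$-heavy; the inequality to be proved is an upper bound for $f$ of the canonical sequence the adjustment process will land on, whose length is the $q$ appearing in the statement.

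Next comes the adjustment phase. I would fix a short list of sum-preserving, order-preserving, length-non-increasing moves — for instance $(3,3)\mapsto(4,2)$, which shifts the profile by $(a,b,c,p)\mapsto(a{+}1,b{-}1,c,p)$; $(2,2)\mapsto(3,1)$, shifting by $(a,b{+}1,c{-}1,p)$; and merges such as replacing a $3$ and a $1$ by a $4$, or two $2$'s by a $4$, which decrease $p$ by $1$ — and for each move compute $\Delta S_1,\Delta S_2,\Delta S_3$ exactly. The increments $\Delta S_1+\Delta S_2$ are affine in the profile, whereas $\Delta S_3$ is quadratic, and the point is that the non-increasing order $a_1\ge\cdots\ge a_p$ forces $\Delta S_3$ to have the correct sign in the relevant range: pushing ``mass'' toward the front of the sequence puts it beneath the larger weights $(j-1)$ and $b$ occurring in $S_3$. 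Once each move is shown to satisfy $\Delta(S_1+S_2+S_3)\ge 0$, applying the moves repeatedly drives an arbitrary greedy sequence to a canonical shape — a block of $4$'s followed by a block of $3$'s, plus at most a bounded number of $2$'s and $1$'s in a controlled position — of some length $q\le p$, with $f(A)\le f(A')$.

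The last step is evaluation and a one-variable optimisation. On each canonical shape the profile is an affine function of a single parameter, which may be taken to be $q$, so $f(A')$ becomes a cubic polynomial in $q$ with $m$-dependent coefficients; for the main family ($4$'s and $3$'s only) one checks that $f(A')$ equals $28q^3-\tfrac{45}{2}q^2m+6qm^2-\tfrac12 m^3+R_1(q)$ exactly, and the remaining canonical families (those using a $2$ or a $1$) give the same expression with $R_2$ or $R_3$ in place of $R_1$. Taking $q$ to be the length of $A'$ and $R_0=\max\{R_1,R_2,R_3\}$ then yields the stated bound; checking that every shape produced by the moves is dominated by one of the three candidates is a finite, mechanical verification.

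The genuine obstacle is the adjustment phase, i.e.\ the move-by-move sign analysis of $\Delta S_3$. Because $f$ is truly cubic in the profile, there is no reason for its maximum to sit at a corner of the tetrahedron (it does not, which is exactly why more than one $R_i$ survives), so a bare vertex argument will not do: for each move $\Delta S_3$ has to be bounded below by re-indexing the nested triple sum and using $a_1\ge\cdots\ge a_p$, sharply enough that the quadratic gain always absorbs the affine loss in $\Delta(S_1+S_2)$. With those finitely many inequalities secured, everything else is routine polynomial bookkeeping.
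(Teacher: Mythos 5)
Your overall architecture matches the paper's: reduce an arbitrary greedy sequence by sum-preserving, $f$-non-decreasing local moves to one of a few canonical shapes (a block of $4$'s, a block of $3$'s, and at most one trailing $2$ or $1$), then evaluate $f$ there to get the three candidates $R_1,R_2,R_3$. The evaluation step you describe is exactly what the paper does, and your observation that $f$ depends on $A$ only through the profile $(a,b,c,p)$ is correct.

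However, there is a genuine gap, and it sits precisely where you flag "the genuine obstacle." First, the move-by-move verification that $\Delta(S_1+S_2+S_3)\ge 0$ is the entire content of the lemma's proof (the paper's Claims 4.5--4.7 compute $\Delta S_1,\Delta S_2,\Delta S_3$ explicitly for each operation and find $\Delta f$ equal to $b(p-c-2)+b\ge 0$, $c(c-b)\ge 0$, and $c(c-b-1)\ge 0$ respectively, under the stated applicability conditions); your proposal defers this entirely, so nothing is actually proved. Second, and more seriously, your proposed move set is provably inconsistent with your own corner computation. You include $4$-creating moves such as $(3,3)\mapsto(4,2)$ and merging two $2$'s (or a $3$ and a $1$) into a $4$. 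If all of these were $f$-non-decreasing, then starting from the all-$3$'s sequence (for $m$ divisible by $12$, say) one could reach the all-$4$'s sequence by repeatedly applying $(3,3)\mapsto(4,2)$ and then merging pairs of $2$'s, which would force $f(3^{m/3})\le f(4^{m/4})$; but as you yourself note, $f(3^{m/3})\approx m^3/27 > m^3/32\approx f(4^{m/4})$. So at least one of your moves strictly decreases $f$ somewhere, and the sign analysis you hope for cannot succeed for that move set. The paper avoids this by choosing operations that never change the number of $4$'s --- $(1,1)\to 2$, $(2,1)\to 3$, and $(2,2)\to(3,1)$, applied with a strict priority order --- which is also essential later, since the refinement in the proof of Lemma 4.1 (via the hypergraph $\mathcal{H}$ on the $4$-blocks) relies on the count $a$ of $4$'s being preserved from $A$ to $A'$.
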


We define three operations for a greedy sequence $A$
(applied in priority order Operation 1$>$Operation 2$>$Operation 3).
Note that the number of 4's remains unchanged throughout all operations.
\begin{itemize}[leftmargin=22mm]
    \setlength\itemsep{0em}
	\item [Operation 1.] 
      If there are at least two 1's. 
      Suppose that $i$ and $j$ are the minimum and maximum indices such that $a_i=a_j=1.$ 
	 In this case, we set $a_i \to 2$, $a_j \to 0$ and $a_k \to a_k$ 
      for all $k\notin\{i,j\}$. 
      Then remove $a_j$ from the new sequence, reducing the length by 1.
	\item [Operation 2.] 
      If there is exactly one 1 and at least one 2. 
      Suppose that $i$ is the minimum index such that $a_i=2$ and $j$ is the index such that $a_j=1.$
      We set $a_i \to 3$, $a_j \to 0$ and $a_k \to a_k$ 
      for all $k\notin\{i,j\}$.  
      Then remove $a_j$ from the new sequence, reducing the length by 1.
	\item [Operation 3.] 
      If there are at least two 2's and no 1. 
      Suppose that $i$ and $j$ are the minimum and maximum indices such that $a_i=a_j=2.$ 
	 Then we set $a_i \to 3,$ $a_j \to 1,$ and $a_k \to a_k$ for all 
      $k\notin\{i,j\}.$
\end{itemize}
Repeated application of Operations 1-3 (applied in priority order) transforms an $(m,p)$-greedy sequence $A$ into an $(m,q)$-greedy sequence $A' = (a_1', \dots, a_q')$ with $q \leq p$ that is \textit{irreducible} (no operations can be applied anymore) and belongs to exactly one of the following three types. 
\begin{itemize}[leftmargin=22mm]
\setlength\itemsep{0em}
\item[\textbf{Type 1:}] $a_i' \in \{3,4\}$ for all $i \in [q].$ 
\item[\textbf{Type 2:}] $a_i' \in \{3,4\}$ for $i \in [q-1]$, and $a_q' = 2.$
\item[\textbf{Type 3:}] $a_i' \in \{3,4\}$ for $i \in [q-1]$, and $a_q' = 1.$
\end{itemize}

First, we prove that for any greedy sequence $A$,
during the process, if we perform the three operations on $A$ in order of priority, the value $f$ of $A$ will not decrease (Claims~\ref{Clm:adjust 1}-\ref{Clm:adjust 3}).
Therefore, we only need to compute the values of the above three types of greedy sequence. 

Let $A_{\text{old}} = (a_1,\dots,a_p)$ be an $(m,p)$-greedy sequence with $a \times 4$'s, $(b-a) \times 3$'s, $(c-b) \times 2$'s, and $(p-c) \times 1$'s. 
We also use the notation $f(4_1,...4_a,3_{a+1},...,3_b,2_{b+1},...,2_c,1_{c+1},..,1_p)$ to represent $f(A_{\text{old}}).$
For any operation, define $\Delta f =f(A_{\text{new}}) - f(A_{\text{old}}) = \Delta S_1 + \Delta S_2 + \Delta S_3,$
where $\Delta S_i = S_i(A_{\text{new}}) - S_i(A_{\text{old}})$ and $A_{\text{new}}$ is the resulting sequence after performing the operation on $A_{\text{old}}.$
\begin{clm}\label{Clm:adjust 1}
	If $p\geq c+2$, then perform Operation 1 on $A_{\text{old}},$ 
    we will get that $\Delta f\ge0.$
\end{clm}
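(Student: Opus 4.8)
The plan is to prove $\Delta f \ge 0$ by computing $\Delta f = \Delta S_1 + \Delta S_2 + \Delta S_3$ term by term. First I would record how Operation~1 moves the parameters. Since Operation~1 applies exactly when $p \ge c+2$, the sorted sequence $A_{\text{old}}$ has at least two $1$'s, occupying the positions $c+1,\dots,p$; the operation raises the entry in position $c+1$ from $1$ to $2$ and deletes the entry in position $p$, and the result is still sorted (the new $2$ stays in position $c+1$, just after the block of $2$'s). Hence $(a,b,c,p) \mapsto (a,b,c+1,p-1)$ while $m$ is preserved. In particular $b$ is unchanged, so $\Delta S_1 = 0$ at once, and since $bc$ is the only term of $S_2(A) = mb+a^2-ab-b^2+bc-a-3b$ involving $c$, we get $\Delta S_2 = b$.

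The bulk of the argument is $\Delta S_3$. Before computing I would collapse the three regimes in Definition~\ref{Dfn:greedy sequence} into a single sum: setting
\[ w(k) := \sum_{j=2}^{k-1} (a_j-1)\,\min(j-1,\,b), \]
one checks that $S_3(A) = \sum_{k=3}^{p} a_k\, w(k)$, using that $a_j - 1 = 0$ for $j>c$ and that $\min(j-1,b)=b$ together with $a_j-1=1$ whenever $b<j\le c$. This form makes the effect of Operation~1 transparent: the weights satisfy $w'(k) = w(k)$ for $k\le c+1$, while for $k\ge c+2$ exactly one new nonzero term appears in $w'(k)$, namely the index $j=c+1$ whose value rose from $1$ to $2$, contributing $1\cdot\min(c,b) = b$; thus $w'(k) = w(k)+b$ there. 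Accounting for the per-index contributions to $\sum_k a_k w(k)$: the multiplier of index $c+1$ rises from $1$ to $2$, giving $+w(c+1)$; each of the $p-c-2$ indices $c+2,\dots,p-1$ contributes $+b$; and deleting position $p$ removes $-w(p)$. Because $a_j-1=0$ for $j>c$, both $w(c+1)$ and $w(p)$ equal $\sum_{j=2}^{c}(a_j-1)\min(j-1,b)$, so they cancel, and $\Delta S_3 = (p-c-2)b$.

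Combining the pieces gives $\Delta f = 0 + b + (p-c-2)b = (p-c-1)\,b$, which is nonnegative since $p\ge c+2$ and $b\ge 0$ (in fact $\Delta f \ge b$, with equality iff $b=0$). I expect the only real difficulty to be bookkeeping: one must verify that after promoting position $c+1$ and deleting position $p$ the sequence remains sorted and the block boundaries shift as claimed, and that the weight $w$ indeed changes by exactly the constant $b$ and only for $k\ge c+2$. A more pedestrian alternative, avoiding the reformulation, is to expand all three sums defining $S_3$ directly using the block values $a_j-1\in\{3,2,1,0\}$, evaluate the resulting arithmetic sums in closed form, and verify the polynomial identity $\Delta S_3 = (p-c-2)b$ by hand; the single-sum rewriting is simply what makes the cancellations visible without computation.
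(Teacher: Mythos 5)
Your proposal is correct and follows essentially the same route as the paper: $\Delta S_1=0$, $\Delta S_2=b$, and $\Delta S_3=(p-c-2)b$, giving $\Delta f=(p-c-1)b\ge 0$. The only difference is presentational — your single-sum rewriting $S_3(A)=\sum_k a_k\sum_{j=2}^{k-1}(a_j-1)\min(j-1,b)$ makes the cancellation of $w(c+1)$ against $w(p)$ transparent, whereas the paper grinds through the three regimes of Definition~4.1 directly.
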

\begin{proof}
It is easy to see that $A_{\text{new}}:=(a_1',...,a_{p-1}')=(a_1,\cdots, a_c,2_{c+1},1_{c+2},\cdots,1_{p-1})$ and 
by Definition~\ref{Dfn:greedy sequence}, we have 
$\Delta S_1 = b-b = 0,$ 
$\Delta S_2 = b(c+1)-bc = b\ge 0,$
and
\begin{align*}
\Delta S_3
&=
2\sum_{j=b+1}^{c}(a_j-1)b + \sum_{k=c+2}^{p-1}\sum_{j=b+1}^{c+1}(a_j'-1)b  - \sum_{k=c+1}^{p}\sum_{j=b+1}^{c}(a_j-1)b\\
&=
2\sum_{j=b+1}^{c}b + \sum_{k=c+2}^{p-1}\sum_{j=b+1}^{c+1}b - \sum_{k=c+1}^{p} \sum_{j=b+1}^{c}b
=b(p-c-2) \geq 0,
\end{align*}
and we are done.
\end{proof}

After repeatedly using Operation 1, we can assume that there is at most one 1 in $A_{\text{old}}=\{a_1,...,a_p\}$.

\begin{clm}\label{Clm:adjust 2}
    If $p=c+1,$ 
    then perform Operation 2 on $A_{\text{old}},$ 
    we will get that $\Delta f\ge0.$
\end{clm}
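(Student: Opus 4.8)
The plan is to follow the template of the proof of Claim~\ref{Clm:adjust 1}: compute $\Delta f = \Delta S_1 + \Delta S_2 + \Delta S_3$ directly from Definition~\ref{Dfn:greedy sequence} and check the total is nonnegative. First I would record the bookkeeping. Since $p = c+1$, the unique $1$ of $A_{\text{old}}$ is $a_p$; and since Operation~2 is being applied, there is at least one $2$, i.e.\ $c \geq b+1$. After the operation the first $2$ (at index $b+1$) becomes a $3$ and the trailing $1$ is deleted, so $A_{\text{new}} = (a_1', \dots, a_{p-1}')$ has parameters $(a, b+1, c, c)$ in place of $(a, b, c, p)$, and every block boundary except the one between the $3$'s and the $2$'s is unchanged. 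Note also that $m = 4a + 3(b-a) + 2(c-b) + (p-c) = a + b + 2c + 1$ here. The contributions of $S_1$ and $S_2$ are immediate: $\Delta S_1 = (b+1) - b = 1$, and substituting $b \mapsto b+1$ (with $a$, $c$, $m$ fixed) into the closed form for $S_2$ gives $\Delta S_2 = m - a - 2b + c - 4$.

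The substantive step is $\Delta S_3$. I would expand $S_3(A_{\text{old}})$ and $S_3(A_{\text{new}})$ via the three-sum formula and collapse the resulting arithmetic progressions. Writing $\Pi := \sum_{j=2}^{b}(a_j-1)(j-1)$ for the weighted prefix over the first $b$ positions --- which is the same for $A_{\text{old}}$ and $A_{\text{new}}$ since those positions are untouched --- and using that the parts indexed $b+1, \dots, c$ all have size $2$ in $A_{\text{old}}$, while in $A_{\text{new}}$ the part indexed $b+1$ has size $3$ and those indexed $b+2, \dots, c$ have size $2$, both values of $S_3$ reduce to polynomials in $\Pi$, $b$, and $s := c - b$. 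The $\Pi$-coefficients of the two expressions turn out to coincide, so $\Pi$ drops out of the difference, leaving
$$\Delta S_3 = -b(s-2)^2 + (b+1)(s-1)(s-2) = (s-2)(b+s-1) = (c-b-2)(c-1).$$

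Finally I would combine: using $m = a+b+2c+1$,
$$\Delta f = 1 + (m - a - 2b + c - 4) + (c-b-2)(c-1) = (3c - b - 2) + (c-b-2)(c-1).$$
If $c \geq b+2$, both summands are nonnegative (the first is at least $2b+4$); if $c = b+1$, then $\Delta f = (2b+1) - b = b+1 \geq 1$; so in all cases $\Delta f > 0$, which proves the claim. I expect the only delicate point to be the $\Delta S_3$ computation --- in particular, correctly tracking how the index ranges of the three sums in $S_3$ shift when one entry is promoted and one entry is deleted, and verifying the exact cancellation of the $\Pi$-terms; the remainder is routine algebra.
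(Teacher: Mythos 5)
Your proposal is correct and follows essentially the same route as the paper: a direct computation of $\Delta S_1=1$, $\Delta S_2=m-a-2b+c-4$, and $\Delta S_3=(c-b-2)(c-1)$ from Definition~\ref{Dfn:greedy sequence}, with the $\Pi$-terms cancelling exactly as you predicted; your final expression simplifies to the paper's $\Delta f=c(c-b)\ge 0$.
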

\begin{proof}
Now, we have $A_{\text{old}}=(a_1,\cdots, a_b,2_{b+1},\cdots,2_{p-1},1_{p})$ and 
$A_{\text{new}}:=(a_1',...,a_{p-1}')=(a_1,\cdots, a_b,\\ 3_{b+1},2_{b+2},\cdots,2_{p-1}).$
Since $m=p+a+b+c$ and $p=c+1,$ by Definition~\ref{Dfn:greedy sequence}, 
we get
$\Delta S_1 =(b+1)-b=1,$ 
$\Delta S_2=m(b+1)-a(b+1)-(b+1)^2+(b+1)c-3(b+1)-(mb-ab-b^2+bc-3b)
=m-a-2b-1+c-3=-b+3c-3,$
and we have $\Delta S_3$ is equal to
\begin{align*}
&3\sum_{j=2}^{b}(a_j-1)(j-1)
+2\left(\sum_{k=b+2}^{c}(k-b-2)(b+1)-\sum_{k=b+1}^{c}(k-b-1)b\right) + 2\sum_{k=b+2}^{c}
\Bigg(\sum_{j=2}^{b+1}(a_j'-1)(j-1)\\
&-\sum_{j=2}^{b}(a_j-1)(j-1)\Bigg) - 2\sum_{j=2}^{b}(a_j-1)(j-1) +\sum_{k=c+1}^{p-1}(c-1)-\sum_{j=b+1}^{c}b-\sum_{j=2}^{b}(a_j-1)(j-1) \\
=&2\sum_{k=b+2}^{c}(k-2b-2)+2\sum_{k=b+2}^{c}(2b)-b(c-b)
=-bc+c^2+b-3c+2.
\end{align*}
Thus, we get that
$\Delta f= \Delta S_1+\Delta S_2+\Delta S_3= -bc+c^2 =c(c-b) \geq 0,$ 
and we are done.
\end{proof}

After using Operation 2, we can assume that there is no $1$ in $A_{\text{old}}=\{a_1,...,a_p\}$.

\begin{clm}\label{Clm:adjust 3}
	If $p=c\geq b+2$, 
    then perform Operation 3 on $A_{\text{old}},$ 
    we will get that $\Delta f\ge0.$
\end{clm}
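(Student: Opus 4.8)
The plan is to follow exactly the template of Claims~\ref{Clm:adjust 1} and~\ref{Clm:adjust 2}: write down the sequence $A_{\text{new}}$ obtained from $A_{\text{old}}$ by Operation~3, then evaluate $\Delta S_1$, $\Delta S_2$, $\Delta S_3$ one at a time from Definition~\ref{Dfn:greedy sequence} and add them. Under the hypotheses of the claim we have $p=c$, no $1$'s, and at least two $2$'s, so $A_{\text{old}}=(a_1,\dots,a_b,2_{b+1},\dots,2_c)$; Operation~3 promotes the first $2$ to a $3$ and demotes the last $2$ to a $1$, giving $A_{\text{new}}=(a_1,\dots,a_b,3_{b+1},2_{b+2},\dots,2_{c-1},1_c)$. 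The number of $4$'s and the total $m$ are unchanged, the prefix $a_1,\dots,a_b$ is untouched, and the triple $(b,c,p)$ becomes $(b+1,\,c-1,\,c)$; I would record at the outset the identity $m=p+a+b+c=2c+a+b$, which holds here since $p=c$ and there are no $1$'s.

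The two cheap contributions come first. Since $S_1=b$, immediately $\Delta S_1=1$. For $S_2=mb+a^2-ab-b^2+bc-a-3b$, substituting $b\mapsto b+1$, $c\mapsto c-1$ (with $a,m$ fixed) and then $m=2c+a+b$ gives, after a one-line computation, $\Delta S_2=-2b+3c-5$.

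The real work is $\Delta S_3$. Setting $T:=\sum_{j=2}^{b}(a_j-1)(j-1)$, I would observe that the first sum of $S_3$ retains all of its old terms and gains exactly one new term, at $k=b+1$, equal to $3T$ (because $a'_{b+1}=3$ and the inner sum runs over $j\le b$, where the entries agree with $A_{\text{old}}$). Re-indexing the second and third sums with $b'=b+1$, $c'=c-1$, every inner prefix-sum $\sum_{j=2}^{b'}(a'_j-1)(j-1)$ becomes $T+2b$, and expanding the remaining sums as arithmetic progressions in $c-b$ one finds that all terms proportional to $T$ cancel; what survives is $\Delta S_3=(c-b-2)(c-2)$. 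Adding up, $\Delta f=\Delta S_1+\Delta S_2+\Delta S_3=1+(-2b+3c-5)+(c-b-2)(c-2)=c(c-b-1)$, which is nonnegative (indeed positive) because $p=c\ge b+2$ forces $c-b-1\ge1$.

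The only real obstacle is the bookkeeping inside $\Delta S_3$: the shift $(b,c)\mapsto(b+1,c-1)$ moves all three nested summation ranges at once, the promoted entry inflates every inner prefix-sum by $2b$, and the demoted entry kills the last block of the second sum. The computation stays manageable only because the untouched prefix $a_1,\dots,a_b$ feeds the same quantity $T$ into both $S_3(A_{\text{old}})$ and $S_3(A_{\text{new}})$, so that $T$ cancels; organizing the index shifts so this cancellation is transparent is where care is needed. No inequality is being estimated---the formula $\Delta f=c(c-b-1)$ is an exact identity, and nonnegativity is immediate from the hypothesis $p=c\ge b+2$.
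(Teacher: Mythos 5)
Your proposal is correct and follows essentially the same route as the paper: compute $\Delta S_1=1$, $\Delta S_2=-2b+3c-5$, and $\Delta S_3$ (your $(c-b-2)(c-2)$ expands to the paper's $c^2-bc+2b-4c+4$), then sum to get the exact identity $\Delta f=c(c-b-1)\ge 0$. The intermediate bookkeeping matches the paper's, including the cancellation of the prefix quantity $T=\sum_{j=2}^{b}(a_j-1)(j-1)$.
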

\begin{proof}
Now, we have $A_{\text{old}}=(a_1,\cdots, a_b,2_{b+1},\cdots,2_p)$ and 
$A_{\text{new}}:=(a_1',...,a_{p}')=(a_1,\cdots, a_b,3_{b+1},2_{b+2},\\\cdots,2_{p-1},1_p).$
Since $m=p+a+b+c$ and $p=c,$ by Definition~\ref{Dfn:greedy sequence}, 
we get that
$\Delta S_1 =(b+1)-b=1,$ 
$\Delta S_2=m-a-2b-1-b+c-1-3=-2b+3c-5,$
and $\Delta S_3$ is equal to
\begin{align*}
&3\sum_{j=2}^{b}(a_j-1)(j-1)
+2\sum_{k=b+2}^{c-1}(k-2)
-2 \sum_{j=b+1}^{c-1}b - 4\sum_{j=2}^{b}(a_j-1)(j-1) + \sum_{j=b+2}^{c-1}(b+1) + \sum_{j=2}^{b+1}(a_j'-1)\cdot \\
& (j-1)
=2\sum_{k=b+2}^{c-1}(k-2)-2b(c-b-1)+(b+1)(c-b-2)+2b
=c^2-bc+2b-4c+4.
\end{align*}
Thus, we get that $\Delta f =\Delta S_1+\Delta S_2+\Delta S_3= -bc+c^2-c=c(c-b-1)\geq 0,$ and we are done.
\end{proof}

Now, we are ready to complete the proof of Lemma~\ref{Lem:adjustment method}.

\noindent
\textbf{Proof of Lemma~\ref{Lem:adjustment method}.}
We consider any $(m,p)$-greedy sequence $A$. 
Through successive applications of Operations 1-3 with priority Operation 1$>$Operation 2$>$Operation 3, we obtain an irreducible $(m,q)$-greedy sequence $A' = (a_1',\ldots,a_q')$ with $q \leq p$ that belongs to one of Types 1-3. 

By Claims~\ref{Clm:adjust 1}, \ref{Clm:adjust 2}, and \ref{Clm:adjust 3}, each operation does not decrease the value of $f$, which implies that $f(A) \leq f(A').$
If $A'$ is of Type 1, 
then $A'=(4,...4,3,...,3).$
We can see that there are $(m-3q)$ many $4$'s and $(4q-m)$ many $3$'s.
To calculate $f(A'),$
by Definition~\ref{Dfn:greedy sequence}, 
plugging into 
$a=m-3q,$ $b=c=q$ and $p=q,$ 
we get that
$S_1 (A') = q$, 
$S_2(A') = (m-3q)^2+3q^2-m=12q^2-6qm+m^2-m$
and
\begin{align*}
S_3(A')=& \sum_{k=3}^{m-3q} 4\times 3\sum_{j=2}^{k-1} (j-1)
+\sum_{k=m-3q+2}^{q} 3 \times 2\sum_{j=m-3q+1}^{k-1} (j-1)
+  \sum_{k=m-3q+1}^{q} 3 \times 3 \sum_{j=2}^{m-3q} (j-1)\\ 
=&12\binom{m-3q}{3}+(4q-m)(4q-m-1)(2m-5q-2) + \frac 9 2(m-3q)(m-3q-1)(4q-m)\\
=&28q^3 - \frac{45}{2}q^2m +6qm^2-\frac{1}{2}m^3
-12q^2+\frac{15}{2}qm -\frac{3}{2}m^2
-4q+2m.  
\end{align*}
Thus, we have 
$ f(A')
=S_1(A')+S_2(A')+S_3(A')
=28q^3 - \frac{45}{2}q^2m +6qm^2-\frac{1}{2}m^3 + R_1,$
where $R_1=\frac{3}{2}qm -\frac{1}{2}m^2-3q+m.$
The cases for Type 2 and Type 3 sequences follow similarly (see detailed calculations in Appendix D).
Indeed, if $A'$ is of Type 2, 
we will get $ f(A')=28q^3 - \frac{45}{2}q^2m +6qm^2-\frac{1}{2}m^3 + R_2,$
where $R_2=-28q^2+\frac{33}{2}qm-\frac{5}{2}m^2+6q-2m,$
and if $A'$ is of Type 3, 
we will get $ f(A')=28q^3 - \frac{45}{2}q^2m +6qm^2-\frac{1}{2}m^3 + R_3,$
where $R_3=-56q^2+\frac{63}{2}qm-\frac{9}{2}m^2+34q-10m-6.$
Therefore, for any $(m,p)$-greedy sequence $A$, there exists an integer $q\le p$ and an $(m,q)$-greedy sequence $A'$ such that $f(A)\le f(A')\le 28q^3-\frac{45}{2}q^2m+6qm^2-\frac{1}{2}m^3+R_0,$
where $R_0=\max\{R_1,R_2,R_3\},$ 
which completes the proof of Lemma~\ref{Lem:adjustment method}.
\QED

\subsection{Structural refinement: completing the proof of Lemma~\ref{Lem:key lemma}}\label{Sec:Proof of better upperbound lemma}
To complete the proof of Lemma~\ref{Lem:key lemma}, we refine the bound from Lemma~\ref{Lem:Bound of 3-clique cover by f(A)}. 
Recall that for a graph $H$ with greedy partition $\mathcal{F}_H = \{A_1,\dots,A_p\}$, 
we constructed a $3$-clique cover of $H$ of size at most $S_1(A)+S_2(A)+S_3(A),$ 
where $S_3(A)$ bounds the number of cliques needed to cover all the triangles with three vertices lying in three different $A_i$'s, i.e., the upper bound of $\dC_3$, where  
\begin{align*}
\dC_3=\bigcup\limits_{i,j,k} \{T_{A_i}[u,v]: u\in A_j, v\in A_k, uv\in E(H),\ 1\le i< j<k\le p\}.
\end{align*}
Let $a$ be the number of $4$-cliques in $A_i$'s. 
Recall that in inequality~\eqref{Equ:upper bound on C3},
by Proposition~\ref{Claim:p<n-delta},
we use $\sum_{k=3}^{a} a_k \sum_{j=2}^{k-1} (a_j-1) (j-1)$ cliques
to cover all the triangles with three vertices lying in $A_i, A_j,A_k$ for all $1\leq i<j<k\le a.$
More precisely, for any fixed triple $(i,j,k)$ with $1\leq i<j<k\le a$, 
we use $4\times3=12$ cliques to cover all the triangles with three vertices lying in $A_i, A_j$ and $A_k,$ respectively. 

In fact, we can significantly improve the bound $\sum_{k=3}^{a} a_k \sum_{j=2}^{k-1} (a_j-1)(j-1)$ by developing a more efficient covering scheme for triangles with three vertices lying in $A_i, A_j,A_k$ for all $1\leq i<j<k\le a$. 
This refinement of $f(A)$ in Lemma~\ref{Lem:Bound of 3-clique cover by f(A)}, combining with Lemma~\ref{Lem:adjustment method}, will complete the proof of Lemma~\ref{Lem:key lemma}.
The key improvement comes from analyzing the subgraph $H[\bigcup_{i=1}^a A_i]$. 
We construct an auxiliary 3-uniform hypergraph $\mathcal{H} = (V(\mathcal{H}), E(\mathcal{H}))$, where
$$ V(\mathcal{H}):=\{ v_i: 1\leq i\leq a\}\ 
\text{and}\ E(\mathcal{H}):= \{ v_iv_jv_k: 1\le i<j<k\le a\ \text{and}\ H[A_i\cup A_j\cup A_k]= \turangraph{12}{4} \}.$$
Since $H$ is $K_5$-free, 
$e(H[A_i\cup A_j\cup A_k])\le e(\turangraph{12}{4})=54,$ 
equality holds if and only if $H[A_i\cup A_j\cup A_k]=\turangraph{12}{4}.$
The following claim states that improvement can be made if $e(H[A_i\cup A_j\cup A_k])<e(\turangraph{12}{4}).$
\begin{clm}\label{Clm:Use less cliques}
	If $1\leq i<j<k\le a$ and
    $e(H[A_i\cup A_j\cup A_k])< e(\turangraph{12}{4}),$
    then at most 11 cliques are needed to cover all the triangles with three vertices lying in $A_i, A_j$ and $A_k,$ respectively.   
\end{clm}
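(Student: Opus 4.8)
The plan is, under the hypothesis $e(H[A_i\cup A_j\cup A_k])<e(\turangraph{12}{4})=54$, to write down an explicit family of at most $11$ cliques covering every triangle that has exactly one vertex in each of $A_i$, $A_j$, $A_k$. The key observation is that, working inside a single triple, we are free to choose which of the three $4$-cliques $A_i,A_j,A_k$ serves as the ``base'' of the covering cliques $T_{A_\bullet}[\,\cdot,\cdot\,]$, rather than being forced (as in the global construction of $\dC_3$) to take the one of smallest index.

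First I would rewrite the edge count. Since $i<j<k\le a$, each of $A_i,A_j,A_k$ induces a $K_4$, so
$$e(H[A_i\cup A_j\cup A_k])=3\binom{4}{2}+e_H(A_i,A_j)+e_H(A_i,A_k)+e_H(A_j,A_k)=18+e_H(A_i,A_j)+e_H(A_i,A_k)+e_H(A_j,A_k).$$
The hypothesis then gives $e_H(A_i,A_j)+e_H(A_i,A_k)+e_H(A_j,A_k)\le 35$, so by pigeonhole at least one of the three cross-edge counts is at most $11$; fix indices $\{s,t\}\subset\{i,j,k\}$ with $e_H(A_s,A_t)\le 11$, and let $r$ be the remaining index in $\{i,j,k\}$.

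Next I would take $\dC':=\{\,T_{A_r}[u,v]\ :\ u\in A_s,\ v\in A_t,\ uv\in E(H)\,\}$. Each $T_{A_r}[u,v]$ is a clique: $A_r$ induces a $K_4$, the vertices $u$ and $v$ are adjacent, and every common neighbour of $u,v$ in $A_r$ is adjacent to both of them and to every other such vertex (and $K_5$-freeness of $H$ even forces $|T_{A_r}[u,v]|\le 4$). Clearly $|\dC'|\le e_H(A_s,A_t)\le 11$. Finally I would verify coverage: any triangle with one vertex in each part can be written as $\{x,u,v\}$ with $x\in A_r$, $u\in A_s$, $v\in A_t$; then $uv\in E(H)$ and $x\in N_{A_r}(u)\cap N_{A_r}(v)$, so $\{x,u,v\}\subseteq N_{A_r}[\{u,v\}]$ and the triangle is a subgraph of $T_{A_r}[u,v]\in\dC'$. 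This yields the claimed bound of $11$.

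I do not expect a genuine obstacle in the claim itself: the only point is the freedom in choosing the base clique, together with the trivial fact that the number of covering cliques based at $A_r$ is at most the number of edges between the other two parts. The delicate bookkeeping lies not here but in the subsequent step that feeds this improved per-triple count back into the sum defining $S_3(A)$, replacing the contribution $\sum_{k=3}^{a} a_k\sum_{j=2}^{k-1}(a_j-1)(j-1)$ and re-optimising via Lemma~\ref{Lem:adjustment method} to arrive at Lemma~\ref{Lem:key lemma}.
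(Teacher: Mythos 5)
Your proof is correct and follows essentially the same route as the paper: pigeonhole on the three cross-edge counts to find a pair $\{A_s,A_t\}$ with $e_H(A_s,A_t)\le 11$, then cover the transversal triangles by the cliques $T_{A_r}[u,v]$ over edges $uv$ between those two parts. The only difference is that you spell out the arithmetic ($18$ internal edges, hence at most $35$ cross edges) that the paper leaves implicit.
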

\begin{proof}
If $e(H[A_i\cup A_j\cup A_k])< e(\turangraph{12}{4}),$
then by the pigeonhole principle, at least one of the number of 
$\{e(A_i,A_j),e(A_i,A_k),e(A_j,A_k)\}$ is at most $11.$ 
Without loss of generality, assume that $e(A_i,A_j)\le 11.$ 
Then we can use the set of cliques
$\{T_{A_k}[u,v]: u\in A_i, v\in A_j, uv\in E(H)\}$
to cover all the triangles with three vertices lying in $A_i, A_j, A_k,$ respectively,
which is of size at most 11.
\end{proof}

Let $|E(\dH)|=\dE$ and $|E(\overline{\dH})|=\overline{\dE}$, where $\overline{\dH}$ denotes the complement graph of $\dH.$
The previous claim establishes that for each hyperedge in $\overline{\mathcal{H}}$, we can reduce our bound on $\coveropt{3}{H}$ by 1 compared to the estimation in Lemma~\ref{Lem:Bound of 3-clique cover by f(A)}. 
This yields an improvement when $\dE$ is small.
Conversely, when $\dE$ is large (meaning many triples induce Tur\'an graphs), we employ a different optimization strategy based on the following structural claim about our auxiliary hypergraph $\mathcal{H}$. 
Let $K_4^{(3)}$ denote the complete 3-uniform hypergraph on 4 vertices, and $K_4^{(3)-}$ denote $K_4^{(3)}$ minus one edge.
\begin{clm}\label{Clm:K4(3)-}
	If $\dH$ contains a $K_{4}^{(3)-}$ on $\{v_i,v_j,v_k,v_\ell\}$, then it must be a $K_{4}^{(3)}.$
    Moreover, we can use 24 4-cliques to cover all the triangles with three vertices lying in three different cliques of $A_i, A_j, A_k, A_\ell$.
\end{clm}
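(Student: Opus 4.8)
The plan is to first pin down the structure of $H[A_i\cup A_j\cup A_k\cup A_\ell]$ and show that it is the Tur\'an graph $\turangraph{16}{4}$, from which both conclusions follow quickly. By relabeling, I may assume the missing hyperedge of the $K_4^{(3)-}$ is $v_jv_kv_\ell$, so that $H[A_i\cup A_j\cup A_k]$, $H[A_i\cup A_j\cup A_\ell]$ and $H[A_i\cup A_k\cup A_\ell]$ are each isomorphic to $\turangraph{12}{4}$. In $\turangraph{12}{4}$ every part has size $3$ and every $4$-clique is a transversal (one vertex per part); since each $A_x$ is a $4$-clique, it meets every part exactly once. Using the $ijk$-copy I will label $V(A_x)=\{a_x^1,\dots,a_x^4\}$ for $x\in\{i,j,k\}$ so that $a_x^r$ lies in the $r$-th part; then the non-edges between any two of $A_i,A_j,A_k$ are exactly the pairs $\{a_x^r,a_y^r\}$, $r\in[4]$.

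The key step is to propagate this labeling to $A_\ell$ consistently. The set of non-adjacent pairs between two fixed cliques is an invariant of $H$, and in a $\turangraph{12}{4}$-copy the non-edges between two transversal $4$-cliques form a perfect matching. Looking at the $ij\ell$-copy, its four parts restricted to $A_i\cup A_j$ give four disjoint non-adjacent $A_i$--$A_j$ pairs, hence are exactly $\{a_i^r,a_j^r\}$, $r\in[4]$; relabeling the parts of this copy, part $r$ is $\{a_i^r,a_j^r,b_\ell^r\}$ for a suitable labeling $V(A_\ell)=\{b_\ell^1,\dots,b_\ell^4\}$, which forces the non-edges between $A_\ell$ and each of $A_i,A_j$ to be $\{a_i^r,b_\ell^r\}$ and $\{a_j^r,b_\ell^r\}$. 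Now in the $ik\ell$-copy, relabel its parts so that part $r$ contains $a_i^r$; the $A_\ell$-vertex of this part is non-adjacent to $a_i^r$, and the only such vertex is $b_\ell^r$, while the $A_k$-vertex is non-adjacent to $a_i^r$, hence is $a_k^r$; so part $r$ is $\{a_i^r,a_k^r,b_\ell^r\}$ and the non-edges between $A_k$ and $A_\ell$ are $\{a_k^r,b_\ell^r\}$. Since every pair among $\{i,j,k,\ell\}$ lies in one of the triples $ijk,ij\ell,ik\ell$, all adjacencies of $H[A_i\cup A_j\cup A_k\cup A_\ell]$ are now determined: the non-edges are precisely the pairs with the same superscript $r$, so $H[A_i\cup A_j\cup A_k\cup A_\ell]=\turangraph{16}{4}$ with parts $P_r=\{a_i^r,a_j^r,a_k^r,b_\ell^r\}$. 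In particular $H[A_j\cup A_k\cup A_\ell]=\turangraph{12}{4}$, hence $v_jv_kv_\ell\in E(\dH)$ and $\dH[\{v_i,v_j,v_k,v_\ell\}]=K_4^{(3)}$.

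For the ``moreover'' part, index the $16$ vertices as pairs $(x,r)$ with $x\in\{i,j,k,\ell\}$ the clique containing the vertex and $r\in[4]$ its part, so that $(x,r)\sim(y,s)$ iff $r\neq s$. For each bijection $\sigma\colon[4]\to\{i,j,k,\ell\}$ set $Q_\sigma=\{(\sigma(r),r):r\in[4]\}$, which is a $4$-clique of $H$; there are $4!=24$ of them. Any triangle whose three vertices lie in three different cliques among $A_i,A_j,A_k,A_\ell$ has the form $\{(x_1,r_1),(x_2,r_2),(x_3,r_3)\}$ with the $x_m$ distinct and, being a triangle, the $r_m$ distinct; the partial injection $r_m\mapsto x_m$ extends uniquely to a bijection $\sigma$, and then $Q_\sigma$ contains this triangle. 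Hence the $24$ cliques $\{Q_\sigma\}$ cover all such triangles.

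The main obstacle is the middle step: verifying that the labelings forced separately by the three overlapping $\turangraph{12}{4}$-copies are mutually consistent, so that the whole $16$-vertex induced subgraph collapses to $\turangraph{16}{4}$. Once that structure is available, both assertions of the claim are immediate, the second via the $24$ ``permutation'' $4$-cliques described above.
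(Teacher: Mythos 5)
Your proof is correct and follows essentially the same route as the paper: establish that $H[A_i\cup A_j\cup A_k\cup A_\ell]=\turangraph{16}{4}$, conclude the fourth hyperedge exists, and cover the transversal triangles with the $4!=24$ permutation $4$-cliques. The only difference is in the consistency step: you propagate labelings across the three overlapping $\turangraph{12}{4}$-copies, whereas the paper deduces the same structure from the three pairwise $\turangraph{8}{4}$'s together with $K_5$-freeness; your version spells out details the paper leaves implicit.
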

\begin{proof}
	Without loss of generality, assume that the three hyperedges of the $K_{4}^{(3)-}$ on $\{v_i,v_j,v_k,v_\ell\}$ are $v_iv_jv_k, v_iv_jv_\ell$ and $v_iv_kv_\ell.$
	We claim that the hyperedge $v_jv_kv_\ell$ must exist.
	Indeed, the hyperedges $v_iv_jv_k, v_iv_jv_\ell$ and $v_iv_kv_\ell$ exist
	means that all of the three pairs $(A_j,A_k),$ $(A_j,A_\ell)$ and $(A_k,A_\ell)$ of vertices in $H$ induce a $\turangraph{8}{4}.$
	Since $H$ is $K_5$-free, we can see that $(A_j,A_k,A_\ell)$ must induce a $\turangraph{12}{4}.$
	Thus, the hyperedge $v_jv_kv_\ell$ exists and $\dH[\{v_i,v_j,v_k,v_\ell\}]$ is a $K_{4}^{(3)}.$

    For the second statement, since $\dH[\{v_i,v_j,v_k,v_\ell\}]$ is a $K_{4}^{(3)},$ 
    we can see that $H[A_i\cup A_j\cup A_k\cup A_\ell]$ is a $\turangraph{16}{4}.$  
    Then we can easily use $4\times3\times2\times1=24$ $4$-cliques to cover all the triangles with three vertices lying in three different cliques of $A_i, A_j, A_k, A_\ell$, and we are done.
\end{proof}

Note that in Lemma~\ref{Lem:Bound of 3-clique cover by f(A)}, our initial bound for covering these triangles was $4 \times 3 \times \binom{4}{3} = 48$ cliques. 
Claim~\ref{Clm:K4(3)-} shows this can be reduced to 24 many $4$-cliques when the hypergraph $\mathcal{H}$ contains complete $K_4^{(3)}$ subgraphs. 
This provides a crucial improvement when $\mathcal{H}$ is dense (i.e., when $\dE$ is large), as we will demonstrate by counting the number of $K_4^{(3)}$ subgraphs in $\mathcal{H}$.

With these preparations, we now complete the proof of Lemma~\ref{Lem:key lemma}.
\medskip

\noindent\textbf{Proof of Lemma~\ref{Lem:key lemma}.}
Throughout the proof of Lemma~\ref{Lem:adjustment method}, the number of 4's in the greedy sequence remains unchanged under all operations. 
Therefore, any improvement to $f(A)$ via Claims~\ref{Clm:Use less cliques} and~\ref{Clm:K4(3)-} directly translates to a corresponding improvement in $f(A')$.

If $\dE \leq \frac{2}{3} \binom{a}{3},$ 
i.e., $\overline{\dE}\ge \frac{1}{3} \binom{a}{3}.$
By Lemma~\ref{Lem:Bound of 3-clique cover by f(A)}, Lemma~\ref{Lem:adjustment method} and  Claim~\ref{Clm:Use less cliques},
we deduce that
\begin{align}\label{Equ:improvement when E is small}
\coveropt{3}{H}
&\leq f(A)- \overline{\dE}\leq f(A')-\frac{1}{3}\binom{a}{3}
\leq
28q^3-\frac{45}{2}q^2m+6qm^2-\frac{1}{2}m^3+R_0
-\frac 1 3 \binom{a}{3}\nonumber \\
&\leq 
28q^3-\frac{45}{2}q^2m+6qm^2-\frac{1}{2}m^3+R_0
-\frac{2}{9}(a-1)^2(a-2)+\frac{1}{6}a^2(a-1),
\end{align}
the last inequality holds since 
$\frac{2}{9}(a-1)^2(a-2)-\frac{1}{6}a^2(a-1)\le \frac 1 3 \binom{a}{3}$ for all $a\ge 0,$
where $a$ denotes the number of $4$'s in the $(m,p)$-greedy sequence $A=\{a_i\}_{i=1}^p,$
which is equivalent to the number of $4$'s in the $(m,q)$-greedy sequence $A'=\{a_i'\}_{i=1}^q$ we get from Lemma~\ref{Lem:adjustment method}.

If $A'$ is of Type 1,
then $a=m-3q,$ and $R_0=R_1=\frac{3}{2}qm-\frac{1}{2}m^2-3q+m.$
Thus, by~\eqref{Equ:improvement when E is small}, we get
\begin{align*}
\coveropt{3}{H}
&\leq
28q^3-\frac{45}{2}q^2m+6qm^2-\frac{1}{2}m^3
+\frac{3}{2}qm-\frac{1}{2}m^2-3q+m-\frac{2}{9}(m-3q-1)^2(m-3q-2)\\
&\ \ \ \ +\frac{1}{6}(m-3q)^2(m-3q-1)
=\frac{59} 2 q^3 - 24 q^2 m + \frac{13}{2} q m^2 - \frac{5}{9} m^3
   +Q_1,
\end{align*}
where $Q_1= \frac {13} 2 q^2 -\frac {17} 6 q m + \frac 2 9 m^2 + \frac{1}{3} q - \frac 1 9 m + \frac 4 9.$
When $A'$ is of Type 2 (Type 3), 
we can get the corresponding bounds are $\frac{59} 2 q^3 - 24 q^2 m + \frac{13}{2} q m^2 - \frac{5}{9} m^3+Q_2\ (Q_3),$ respectively (see the justification in Appendix E).
Therefore, we get 
\begin{align}\label{Equ:best bound of CC3(H)}
    \coveropt{3}{H}\le \frac{59} 2 q^3 - 24 q^2 m + \frac{13}{2} q m^2 - \frac{5}{9} m^3+Q_0,
\end{align}
where $Q_0=\max\{Q_1,Q_2,Q_3\},$ and we are done.

When $a\le 2,$ by the definition of $\dH$, $|V(\dH)|=a\le2,$ and 
$\dE=\overline{\dE}=0$.
By~\eqref{Equ:improvement when E is small}and \eqref{Equ:best bound of CC3(H)}, we are done.
Thus, in the following of the proof, we may assume that $a\ge 3.$

If $\dE \ge \frac{2}{3} \binom{a}{3},$
then we will show that $\mathcal{H}$ contains numerous $K_4^{(3)-}.$
By Claim \ref{Clm:K4(3)-}, this implies that $\mathcal{H}$ contains numerous $K_4^{(3)}.$ 
Each $K_4^{(3)}$ allows us to replace the original 48-clique covering of its corresponding $\turangraph{16}{4}$ with a more efficient 24-clique covering, which significantly reduces our bound on $\coveropt{3}{H}$.
However, note that the $3$-clique cover number obtained by this reduction will repeat if two $K_4^{(3)}$ intersect at more than two vertices in $\dH.$
If two $K_4^{(3)}$ intersect at most two vertices in $\dH,$
then there is nothing to worry about.

For $v\in V(\mathcal{H}),$ let $d_v$ be the degree of $v$, i.e., the number of hyperedges containing $v.$ 
The link graph of $v$ is the simple graph $H_v,$
with vertex set $V(H_v)=V(\mathcal{H}),$
and edge set 
$E(H_v)=\{xy:x,y\in V(\dH)\ \text{and}\ vxy\in E(\mathcal{H})\}.$
Let $k_3^v$ denote the number of triangles in the link graph of $v.$
By double counting the edges in $H_v$, we have
\begin{equation}\label{Equ:double counting edges}
\sum_{v\in V(\mathcal{H})} d_v = 3  \dE \geq 2 \binom{a}{3}.
\end{equation}
By Claim~\ref{Clm:K4(3)-}, 
one can easily check that a triangle on vertex set $\{x,y,z\}$ in $H_v$ with $v$ form a $K_4^{(3)}$ in $\dH.$
By Lemma~\ref{Lem:MM}, 
we have
$k_3^v\geq \frac{4}{3} \frac{d_v^2}{a} -\frac{a}{3} d_v,$ i.e., 
the number of $K_4^{(3)}$ containing $v$ is at least $\frac{4}{3} \frac{d_v^2}{a} -\frac{a}{3}d_v.$
Then, by Jensen's inequality and~\eqref{Equ:double counting edges}, the total number of  $K_4^{(3)}$ in $\dH$ is at least
\begin{align*}
&\frac{1}{4}\left(\frac{4}{3} 
\left( \frac{\sum_{v \in V(\mathcal{H})} d_v}{a} \right)^2
- \frac a 3\sum_{v \in V(\mathcal{H})} d_v \right)
=\frac 13 \left( \frac{3\dE}{a} \right)^2 - \frac 1 4 a \dE\\
\ge& \frac 1 3\left(\frac{2 \binom{a}{3}}{a} \right)^2
- \frac{1}{6} a \binom{a}{3}
=\frac{1}{27}(a-1)^2(a-2)^2-\frac{1}{36}a^2(a-1)(a-2),
\end{align*}
where the inequality holds since the function $\frac 13 \left( \frac{3\dE}{a} \right)^2 - \frac 1 4 a \dE$ of $\dE$ is increasing when $\dE \geq \frac{a^3}{24}$
which holds by the assumption that $\dE \ge \frac{2}{3} \binom{a}{3}$ and $a\ge 3.$

Note that for each $K_4^{(3)},$ there are at most $\binom 4 3(a-4)+1\leq 4(a-2)$ many
$K_4^{(3)}$ intersect more than $2$ vertices with it.
Thus, there are at least
$\frac{1}{108}(a-1)^2(a-2)-\frac{1}{144}a^2(a-1)$
$K_4^{(3)}$ in $\dH$ such that no two of them intersect with more than 2 vertices.
For each such $K_4^{(3)}$, Claim~\ref{Clm:K4(3)-} allows us to reduce the bound on $\coveropt{3}{H}$ in Lemma~\ref{Lem:Bound of 3-clique cover by f(A)} and~\ref{Lem:adjustment method} by 24 cliques without overlap. 
Thus, we get that
\begin{align*}
\coveropt{3}{H}
&\leq  
f(A)-\frac{2}{9}(a-1)^2(a-2)+\frac{1}{6}a^2(a-1)\le 
f(A')-\frac{2}{9}(a-1)^2(a-2)+\frac{1}{6}a^2(a-1)\\
&\leq
28q^3-\frac{45}{2}q^2m+6qm^2-\frac{1}{2}m^3+R_0
-\frac{2}{9}(a-1)^2(a-2)+\frac{1}{6}a^2(a-1),
\end{align*}
which is the same as the inequality~\eqref{Equ:improvement when E is small}.
Therefore, we obtain
$\coveropt{3}{H}\le \frac{59} 2 q^3 - 24 q^2 m + \frac{13}{2} q m^2 + \frac{5}{9} m^3+Q_0,$ 
with $Q_0=\max\{Q_1,Q_2,Q_3\}$ again, which completes the proof of Lemma~\ref{Lem:key lemma}.
\QED

\section*{Acknowledgement}
Tianying Xie was supported by National Key R\&D Program of China 2023YFA1010201 and National Natural Science Foundation of China grant 12471336. Jialin He was supported by Hong Kong RGC grant GRF 16308219 and Hong Kong RGC grant ECS 26304920.

\bibliographystyle{unsrt}

\subsection*{Appendix A: Justification of the inequality~\eqref{Equ:bound of clique number} for $6\le c \le 10$}\label{Sec:Appendix A}

Let $g(n,c)=\frac{(n-2)^2(n+2)^2}{256}-\left(\frac{n-c}{4}\right)^4 +1 +(n-c)+\binom c2\left(\frac{n-c}{c}\right)^2 +\binom{c}{3}\left(\frac{n-c}{c}\right)^3$. 
Then, inequality~\eqref{Equ:bound of clique number} holds for $6\le c \le 10$ is equivalent to show that $g(n,c)>0$ holds for $6\le c \le 10.$

For each $6\le c\le 10$, the function $g(n,c)$ and its derivative $g'(n,c)$ are as follows.
\vspace{-0.4cm}
\begin{align*} 
&g(n,6)   = \tfrac{1}{864}n^3 + \tfrac{3}{8}n^2 - \tfrac{21}{8}n + 5, 
& &g'(n,6)   = \tfrac{1}{288}n^2 + \tfrac{3}{4}n - \tfrac{21}{8} > 0 \ (n \geq 6). \\
&g(n,7)   = \tfrac{23}{3136}n^3 + \tfrac{479}{896}n^2 - \tfrac{297}{64}n + \tfrac{2735}{256}, 
& &g'(n,7)   = \tfrac{69}{3136}n^2 + \tfrac{479}{448}n - \tfrac{297}{64} > 0 \ (n \geq 7). \\ 
&g(n,8)   = \tfrac{1}{64}n^3 + \tfrac{21}{32}n^2 - 7n + \tfrac{305}{16},
& &g'(n,8)   = \tfrac{3}{64}n^2 + \tfrac{21}{16}n - 7 > 0 \ (n \geq 8). \\
&g(n,9)   = \tfrac{395}{15552}n^3 + \tfrac{283}{384}n^2 - \tfrac{615}{64}n + \tfrac{7791}{256}, 
& &g'(n,9)   = \tfrac{395}{5184}n^2 + \tfrac{283}{192}n - \tfrac{615}{64} > 0 \ (n \geq 9). \\
&g(n,10)  = \tfrac{29}{800}n^3 + \tfrac{31}{40}n^2 - \tfrac{99}{8}n + 45,
& &g'(n,10)  = \tfrac{87}{800}n^2 + \tfrac{31}{20}n - \tfrac{99}{8} > 0 \ (n \geq 10).  
\end{align*}
Thus, since $n\ge c,$ we have $g(n,c)\ge g(c,c)>0$ holds for $6\le c\le 10$, which completes the proof.
\QED

\subsection*{Appendix B: Proof of Theorem~\ref{Thm:Main} for $6 \leq n \leq 104$ with $n \not\in \{97,101\}$}

Recall that in inequality~\eqref{Equ:bound of clique number}, 
we can replace the first term $(\frac{n-5}{4})^4$ with $k_4(\turangraph{n-5}{4})$ because, using the induction hypothesis, we have $|\mathcal{C}_1'|\le k_4(\turangraph{n-5}{4})$.
Thus, we obtain that
\begin{equation*}
	\mathcal{C}\mathcal{C}_4(G)
    \leq 
    |\dC_1|
    \leq k_4(\turangraph{n-5}{4}) + 1+(n-5)
	+\frac{2}{5}(n-5)^2
	+\frac{2}{25}(n-5)^3.
\end{equation*}
Define $h(n): = k_4(\turangraph{n}{4})-k_4(\turangraph{n-5}{4}) - 1-(n-5)
-\frac{2}{5}(n-5)^2-\frac{2}{25}(n-5)^3$. 
It suffices to verify that $h(n)>0$ holds for $6\leq n\leq 104$ with $n\notin\{97,101\}$, 
which holds by the following case analysis.
\vspace{-0.4cm}
$$h(n)=
	\begin{cases}
	-\frac{1}{25}(3k^3 - 90k^2 + 75k - 50)>0\ \ \text{for}\ 0\le k\le 29, &\text{if}\ n=4k, \\
	-\frac{1}{25}(3k^3 - 74k^2 + 64k - 18)>0\  \ \text{for}\ 0\le k\le 23, &\text{if}\ n=4k+1,\\
	-\frac{1}{25}(3k^3 - 78k^2 + 51k - 14)>0\ \  \text{for}\ 0\le k\le 25, &\text{if}\ n=4k+2,\\
    -\frac{1}{25}(3k^3 - 82k^2 + 11k - 1)>0\ \, \ \ \text{for}\ 0\le k\le 27, &\text{if}\ n=4k+3.
	\end{cases}
$$
This completes the proof of Theorem~\ref{Thm:Main} for $6 \leq n \leq 104$ with $n \not\in \{97,101\}$.
\QED

\subsection*{Appendix C: Justification of the inequalities in the proof of Theorem~\ref{Thm:Main}.}

Here we provide detailed calculations to show that, in the proof of Theorem~\ref{Thm:Main},
$\coveropt{3}{H}\le \frac{59}{2}q^3 -24q^2d +\frac{13}{2}qd^2 -\frac{5}{9}d^3+Q_0<\frac{1}{64}(n-1)^3$ holds when $Q_0=Q_2$ or $Q_0=Q_3,$ where 
$Q_2=-23q^2+\frac{79}{6}qd-\frac{35}{18}d^2+\frac{11}{2}q-\frac{11}{6}d,$ and 
$Q_3= -\frac{105}{2}q^2+\frac{175}{6}qd-\frac{37}{9}d^2+\frac{92}{3}q-\frac{80}{9}d-\frac{16}{3}.$

Recall that $n\geq 105$ or $n\in \{97,101\}$, $\lfloor \frac{3}{4}n \rfloor +1\leq d \leq \frac{4}{5}n$, and from \eqref{Equ:range of q},
$\frac 1 4 d\le q\le p\le n-d<\frac 1 3 d.$
Thus, we have $q\geq \frac 1 4 (\lfloor \frac{3}{4}n \rfloor +1)\geq 18$.
If $Q_0=Q_2$, we get that
\begin{align*}\label{Equ:CC3(H)<F1(q,d)}
\coveropt{3}{H} \leq	
\frac{59}{2}q^3 -24q^2d +\frac{13}{2}qd^2 -\frac{5}{9}d^3
-23q^2+\frac{79}{6}qd-\frac{35}{18}d^2+\frac{11}{2}q-\frac{11}{6}d:=F(q,d). 
\end{align*}
Then $F'_d(q,d)=-24q^2+13qd-\frac 5 3 d^2+\frac{79}{6} q -\frac{35}{9}d -\frac{11}{6}.$
By~\eqref{Equ:range of q}, $3q<d\le 4q.$
It is not hard to see that
$F'_d(q,3q)=\frac 3 2 q - \frac {11} 6 >0,$ and
$F'_d(q,4q)=\frac 4 3 q^2-\frac {43}{18} q - \frac {11} 6>0$
holds for $q\geq 4.$
Therefore, $F(q,d)$ is increasing with respect to $d.$ 

If $q<\frac n 5,$
then $$F(q,d)\le F(q,4q)=\frac{35}{18}q^3-\frac{13}{9}q^2-\frac{11}{6}q<
\frac{7}{450}n^3-\frac{13}{225}n^2-\frac{11}{30}n<\frac{1}{64}(n-1)^3,$$
where the second inequality holds by the function $ F(q,4q)$ is increasing on $q$ when $q\ge1,$
and the last inequality holds when $n\ge 1.$

If $q\ge \frac n 5,$ since $d\le n-q,$ we have
\begin{equation*}\label{Equ:g1(q)}
F(q,d)\le F(q,n-q)=\frac{545}{9}q^3-\frac{116}{3}q^2n+\frac{49}{6}qn^2-\frac{5}{9}n^3-\frac{343}{9}q^2+\frac{307}{18}qn-\frac{35}{18}n^2+\frac{22}{3}q-\frac{11}{6}n.
\end{equation*}
Let the result equation be $g(q).$ 
Then $g'(q)=\frac{545}{3}q^2-\frac{232}{3}qn+\frac{49}{6}n^2-\frac{686}{9}q+\frac{307}{18}n+\frac{22}{3}.$
Since $\frac n 5\le q\le n-\lfloor\frac{3}{4}n\rfloor-1=\lceil\frac{n}{4}\rceil-1<\frac n 4,$
it is easy to see that 
$g'(0)=\frac{49}{6}n^2 + \frac{307}{18}n + \frac{22}{3}>0$ for $n\ge 0$,
$$g'(\frac n 5)=-\frac{1}{30}n^2+\frac{163}{90}n+\frac{22}{3}<0\ \  \text{and}\ \ 
g'(\frac n 4)=\frac{3}{16}n^2 - 2n +\frac{22}{3}>0,$$ 
when $n\ge 59.$
Thus, $g(q)$ is decreasing first and then increasing with respect to $q$ 
and the maximum of $g(q)$ will take value at the endpoints of the interval containing $q,$ i.e., 
$g(q)\le \max\{g(\frac n 5),g(\lceil\frac{n}{4}\rceil-1)\}.$
Note that 
$g(\frac n 5)=F(\frac n 5,\frac {4n} 5)=\frac{7}{450}n^3-\frac{13}{225}n^2-\frac{11}{30}n<\frac{1}{64}(n-1)^3,$ when $n\ge 1.$
And 
$g(\frac{n}{4})=\frac{1}{64}n^3-\frac{1}{16}n^2<\frac{1}{64}(n-1)^3,$
when $n\ge 1.$
Therefore,  when $Q_0=Q_2$, $\coveropt{3}{H}<\frac{1}{64}(n-1)^3$ and
$\coveropt{4}{G}\le \coveropt{4}{G'}+\coveropt{3}{H}<\coveropt{4}{\turangraph{n}{4}},$ 
and we are done.

If $Q_0=Q_3$, we get that
\begin{align*}\label{Equ:CC3(H)<F2(q,d)}
\coveropt{3}{H} \leq	
\frac{59}{2}q^3 -24q^2d +\frac{13}{2}qd^2 -\frac{5}{9}d^3
-\frac{105}{2}q^2+\frac{175}{6}qd-\frac{37}{9}d^2+\frac{92}{3}q-\frac{80}{9}d-\frac{16}{3}:=F(q,d). 
\end{align*}
Then $F'_d(q,d)=-24q^2+13qd-\frac 5 3 d^2+\frac{175}{6} q -\frac{74}{9}d -\frac{80}{9}.$
$F'_d(q,3q)=\frac 9 2 q - \frac {80} 9 >0,$ and
$F'_d(q,4q)=\frac 4 3 q^2-\frac {55}{18} q - \frac {80} 9>0$
holds for $q\geq 4.$
Therefore, $F(q,d)$ is increasing with respect to $d.$ 

If $q<\frac n 5,$
then $$F(q,d)\le F(q,4q)=\frac{35}{18}q^3-\frac{29}{18}q^2-\frac{44}{9}q-\frac {16} 3<
\frac{7}{450}n^3-\frac{29}{450}n^2-\frac{44}{45}n-\frac {16} 3<\frac{1}{64}(n-1)^3,$$
where the second inequality holds by the function $ F(q,4q)$ is increasing on $q$ when $q\ge2,$
and the last inequality holds for all $n\ge 0.$

If $q\ge \lceil \frac n 5 \rceil,$ since $d\le n-q,$ we have
\begin{equation*}\label{Equ:g2(q)}
F(q,d)\le F(q,n-q)=\frac{545}{9}q^3-\frac{116}{3}q^2n+\frac{49}{6}qn^2-\frac{5}{9}n^3-\frac{772}{9}q^2+\frac{673}{18}qn-\frac{37}{9}n^2+\frac{356}{9}q-\frac{80}{9}n-\frac {16} 3.
\end{equation*}
Let the result equation be $g(q).$ 
Then $g'(q)=\frac{545}{3}q^2-\frac{232}{3}qn+\frac{49}{6}n^2-\frac{1544}{9}q+\frac{673}{18}n+\frac{356}{9}.$
Since $\lceil \frac n 5 \rceil\le q\le n-\lfloor\frac{3}{4}n\rfloor-1=\lceil\frac{n}{4}\rceil-1<\frac n 4,$
it is easy to see that 
$g'(0)=\frac{49}{6}n^2+\frac{673}{18}n+\frac{356}{9}>0$ for $n\ge 0$,
$$g'(\frac n 5)=-\frac{1}{30}n^2+\frac{277}{90}n+\frac{356}{9}<0 \ \text{for}\ n\ge 104, \ \text{and}\  g'(\frac n 4)=\frac{3}{16}n^2-\frac{11}{2}n+\frac{356}{9}>0\ \text{for}\ n\ge 17.$$
Thus, if $n\ge 105$,  
$g(q)$ is decreasing first and then increasing with respect to $q$
and the maximum of $g(q)$ will take value at the endpoints of the interval containing $q,$ i.e., 
$g(q)\le \max\{g(\frac n 5),g(\frac{n}{4})\}$.
Note that 
$g(\frac n 5)=F(\frac n 5,\frac {4n} 5)=\frac{7}{450}n^3-\frac{29}{450}n^2-\frac{44}{45}n-\frac {16} 3<\frac{1}{64}(n-1)^3$ for all $n\ge 0,$
and 
$g(\frac{n}{4})=F(\frac n 4,\frac {3n} 4)=\frac{1}{64}n^3-\frac{1}{8}n^2+n-\frac{16}{3}<\frac{1}{64}(n-1)^3$
for all $n\ge 0$.

If $n=97$, then
$g'(\lceil \frac n 5 \rceil)=g'(20)<0$
and $g(q)\le \max\{g(\lceil \frac n 5 \rceil), g(\lceil\frac{n}{4}\rceil-1)\}=\max\{g(20), g(24)\}=13406<13824=\frac{(97-1)^3}{64}$.

If $n=101$, then 
$g'(\lceil \frac n 5 \rceil)=g'(21)<0$
and $g(q)\le \max\{g(\lceil \frac n 5 \rceil), g(\lceil\frac{n}{4}\rceil-1)\}=\max\{g(21), g(25)\}=15099<15625=\frac{(101-1)^3}{64}$.

Therefore, when $Q_0=Q_3$, $\coveropt{3}{H}<\frac{1}{64}(n-1)^3$ and 
$\coveropt{4}{G}\le \coveropt{4}{G'}+\coveropt{3}{H}<\coveropt{4}{\turangraph{n}{4}},$ 
which completes the proof of Theorem~\ref{Thm:Main}.
\QED

\subsection*{Appendix D: Justification of the inequalities in the proof of Lemma~\ref{Lem:adjustment method}.}

Here we show that if $A'$ is of Type 2 (Type 3), then $f(A')= 28q^3-\frac{45}{2}q^2m+6qm^2-\frac{1}{2}m^3+R_2 (R_3),$
where
$R_2=-28q^2+\frac{33}{2}qm-\frac{5}{2}m^2+6q-2m$ and 
$R_3=-56q^2+\frac{63}{2}qm-\frac{9}{2}m^2+34q-10m-6$.

If $A'$ is of Type 2, 
then $A'=(4,...4,3,...,3,2).$ 
We can see that there are $(m-3q+1)$ many $4$'s and $(4q-m-2)$ many $3$'s. 
To calculate $f(A'),$
by Definition~\ref{Dfn:greedy sequence}, 
plugging into $a=m-3q+1,$ $b=q-1$, $c=q$ and $p=q$, 
we get that
$S_1 (A') =b= q-1$, 
$S_2(A') = 12q^2-6qm+m^2-9q+m+3$,
and
\begin{align*}
S_3(A')
=& \sum_{k=3}^{m-3q+1} 4\times 3\sum_{j=2}^{k-1} (j-1)
+\sum_{k=m-3q+2}^{q-1} 3 \times 3\sum_{j=2}^{m-3q+1} (j-1)
+\sum_{k=m-3q+3}^{q-1} 3 \times 2 \sum_{j=m-3q+2}^{k-1} (j-1)\\
&+2\left(3\sum_{j=2}^{m-3q+1}(j-1)+2\sum_{m-3q+2}^{q-1}(j-1)\right)\\ 
&=28q^3 - \frac{45}{2}q^2m +6qm^2-\frac{1}{2}m^3- 40q^2+\frac{45}{2}qm -\frac{7}{2}m^2+14q-3m-2.&
\end{align*}
Thus, we have 
$ f(A')
=S_1(A')+S_2(A')+S_3(A')
=28q^3 - \frac{45}{2}q^2m +6qm^2-\frac{1}{2}m^3
+R_2,
$
where $R_2=-28q^2+\frac{33}{2}qm-\frac{5}{2}m^2+6q-2m.$

If $A'$ is of Type 3, 
then $A'=(4,...4,3,...,3,1).$ 
We can see that there are $(m-3q+2)$ many $4$'s and $(4q-m-3)$ many $3$'s. 
To calculate $f(A'),$
by Definition~\ref{Dfn:greedy sequence}, 
plugging into $a=m-3q+2,$ $b=c=q-1$ and $p=q$, 
we have
$S_1 (A') =b= q-1$, 
$S_2(A') = 12q^2-6qm+m^2-17q+3m+7$, and
\begin{align*}
S_3(A')
=& \sum_{k=3}^{m-3q+2} 4\times 3\sum_{j=2}^{k-1} (j-1)
+\sum_{k=m-3q+3}^{q-1} 3 \times 3\sum_{j=2}^{m-3q+2} (j-1)
+\sum_{k=m-3q+4}^{q-1} 3 \times 2 \sum_{j=m-3q+3}^{k-1} (j-1)\\
&+3\sum_{j=2}^{m-3q+2}(j-1)+2\sum_{m-3q+3}^{q-1}(j-1)\\ 
&=28q^3 - \frac{45}{2}q^2m +6qm^2-\frac{1}{2}m^3- 68q^2+\frac{75}{2}qm -\frac{11}{2}m^2+50q-13m-12&.
\end{align*}
Thus, we have 
$ f(A')
=S_1(A')+S_2(A')+S_3(A')
=28q^3 - \frac{45}{2}q^2m +6qm^2-\frac{1}{2}m^3
+R_3,
$
where $R_3=-56q^2+\frac{63}{2}qm-\frac{9}{2}m^2+34q-10m-6.$
This completes the proof of Lemma~\ref{Lem:adjustment method}.
\QED

\subsection*{Appendix E: Justification of the inequality~\eqref{Equ:best bound of CC3(H)}}
Here we show that if $A'$ is of Type 2 (Type 3), then
$\coveropt{3}{H}\le \frac{59} 2 q^3 - 24 q^2 m + \frac{13}{2} q m^2 - \frac{5}{9} m^3+Q_2 (Q_3)$,
where $Q_2=-23q^2+\frac{79}{6}qm-\frac{35}{18}m^2+\frac{11}{2}q-\frac{11}{6}m,$ and
$Q_3= -\frac{105}{2}q^2+\frac{175}{6}qm-\frac{37}{9}m^2+\frac{92}{3}q-\frac{80}{9}m-\frac{16}{3}.$

If $A'$ is of Type 2,
then $a=m-3q+1,$ and $R_0=R_2=-28q^2+\frac{33}{2}qm-\frac{5}{2}m^2+6q-2m.$
By~\eqref{Equ:improvement when E is small}, we get that
\begin{align*}
\coveropt{3}{H}
\leq&
28q^3-\frac{45}{2}q^2m+6qm^2-\frac{1}{2}m^3
-28q^2+\frac{33}{2}qm-\frac{5}{2}m^2+6q-2m-\frac{2}{9}(m-3q)^2(m-3q-1)\\
&+\frac{1}{6}(m-3q+1)^2(m-3q)=\frac{59} 2 q^3 - 24 q^2 m + \frac{13}{2} q m^2 - \frac{5}{9} m^3
   +Q_2,
\end{align*}
where $Q_2=-23q^2+\frac{79}{6}qm-\frac{35}{18}m^2+\frac{11}{2}q-\frac{11}{6}m.$

If $A'$ is of Type 3,
then $a=m-3q+2,$ and $R_0=R_3=-56q^2+\frac{63}{2}qm-\frac{9}{2}m^2+34q-10m-6.$
By~\eqref{Equ:improvement when E is small}, we get that
\begin{align*}
\coveropt{3}{H}
\leq&
28q^3-\frac{45}{2}q^2m+6qm^2-\frac{1}{2}m^3
-56q^2+\frac{63}{2}qm-\frac{9}{2}m^2+34q-10m-6
-\frac{2}{9}(m-3q+1)^2\cdot\\
&\cdot(m-3q)
+\frac{1}{6}(m-3q+2)^2(m-3q+1)
=\frac{59} 2 q^3 - 24 q^2 m + \frac{13}{2} q m^2 + \frac{5}{9} m^3
   +Q_3,
\end{align*}
where $Q_3= -\frac{105}{2}q^2+\frac{175}{6}qm-\frac{37}{9}m^2+\frac{92}{3}q-\frac{80}{9}m-\frac{16}{3},$
which completes the proof of~\eqref{Equ:best bound of CC3(H)}.
\QED

\end{document}